
\documentclass[12pt,a4paper]{article}
\usepackage[OT1]{fontenc}
\usepackage[english]{babel}
\usepackage{amsfonts}
\usepackage{amsthm}

\usepackage{graphicx}

\topmargin0cm
\textheight22cm
\footnotesep.3cm
\evensidemargin0cm
\oddsidemargin0cm
\textwidth16cm

\def\ddd{ {\bf \Delta}}
\def\h{ {\cal H} }
\def\a{ {\cal A} }

\def\l{ {\cal L} }
\def\n{ {\cal N} }
\def\v{ {\cal V} }

\def\b{ {\cal B} }

\def\m{ {\cal M} }

\def\e{ {\cal E} }
\def\o{ {\cal O} }

\def\c{ {\cal C} }

\def\j{ {\cal J} }

\def\DD{\mathbb{D}}

\newtheorem{teo}{Theorem}[section]
\newtheorem{prop}[teo]{Proposition}
\newtheorem{lem}[teo]{Lemma}
\newtheorem{coro}[teo]{Corollary}

\theoremstyle{definition}
\newtheorem{rem}[teo]{Remark}

\title{The C$^*$-algebra of a composition reflection.}
\author{Esteban Anduchow}

\begin{document}

\maketitle 

\begin{abstract}
We study the  C$^*$ algebra generated by the composition operator $C_a$ acting on the Hardy space $H^2$ of the unit disk, given by $C_af=f\circ\varphi_a$, where 
$$
\varphi_a(z)=\frac{a-z}{1-\bar{a}z},
$$ 
for $|a|<1$. Also several operators related to $C_a$ are examined.
 \end{abstract}

\bigskip

{\bf 2020 MSC: 47A05, 47B15, 47B33}  .

{\bf Keywords: Reflections, Idempotents, Disk automorphisms }  .

\section{Introduction}

Let $\mathbb{D}=\{z\in\mathbb{C}:|z|\le 1\}$ be the unit disk and $\mathbb{T}=\{z\in\mathbb{C}: |z|=1\}$ the unit circle. For $a\in\mathbb{D}$, consider the analytic automorphism which maps $\mathbb{D}$ onto $\mathbb{D}$  
$$
\varphi_{a}(z)=\frac{a-z}{1-\bar{a}z},
$$
The fact that $\varphi_a(\varphi_a(z))=z$ implies  that the composition operator 
\begin{equation}\label{Ca}
C_{\varphi_a} f=f\circ\varphi_a.
\end{equation}
induced by this automorphisms is a  {\it reflection} (i.e.,  satisfies  that $C_a^2=I$) in  $H^2=H^2(\DD)$,  the Hardy space of the disk (see \cite{cowenmccluer}).
In this note we characterize the C$^*$-algebra $\c^*(C_a)$ generated by $C_a$, which is also the C$^*$-algebra generated by two projections, the ortogonal projections onto the two eigenspaces of $C_a$: $N(C_a-I)$ and $N(C_a+I)$. We profit from the vast bibliography on this subject, specially  the results by G.K Pedersen \cite{pedersen}, and the excellent survey \cite{spitkovskyetal}.

We also consider several operators related to $C_a$. Among these, the symmetry (=selfadjoint reflections) $\rho_a$, obtained from the polar decomposition of $C_a$, and $W_a=M_{\psi_a}C_a$, where $\psi_a=\frac{(1-|a|^2)^{1/2}}{1-\bar{a}z}$ is the normalized Szego kernel.

The characterization of $\c^*(C_a)$ requires the study of the spectrum of the product of the projections onto $N(C_a\pm I)$.

The contents of the paper are the following. In Section 2 we introduce preliminay facts and notations. In Section 3 we study elementary relations between the spectra of of $PQ$ and $P\pm Q$, for $P,Q$ orthogonal projections. In Section 4 we apply these result to the case of $P=P_{N(C_a-I)}$ and $Q=P_{N(C_a+I)}$. In the short Section 5 we use these data to characterize $\c^*(C_a)$, using the results of G.K. Pedersen \cite{pedersen} (via the exposition done in \cite{spitkovskyetal}). The automorphism $\varphi_a$ induces also a reflection in $L^2(\mathbb{T})$, which we call $\Gamma_a$; in Section 6 we study the relation between $C_a$ and $\Gamma_a$. In Section 7 we study the mentioned symmetries $\rho_a$ and $W_a$. In Section 8 we consider the relation of $C_a$ with the Toeplitz isometry $T_{\varphi_{\omega_a}}$, where $\omega_a$ is the unique fixed point of $\varphi_a$ inside $\mathbb{D}$.

\section{Preliminaries and notations}
Note that $C_0f(z)=f(-z)$. Denote by $\e$ the subspace of {\it even } functions in $H^2$:
$$\e=\{f\in H^2: f(z)=f(-z)\}=N(C_0-I).
$$
Its  orthogonal complement is the space $\o$ of {\it odd } functions, $\o=N(C_0+I)$.
Denote by  $\omega_a$  the unique fixed point of $\varphi_a$ inside the disk:
$$
\omega_a:=\frac{1}{\bar{a}}\{1-\sqrt{1-|a|^2}\} \ \hbox{ if } a\ne 0, \ \hbox{ and } \omega_0=0.
$$
The fixed point $\omega_a$ is useful in describing the two eigenspaces of $C_a$ for $a\ne 0$. In \cite{disco y composicion} it was shown that
$$
N(C_a-I)=C_{\omega_a}(\e) \ \hbox{ and } \ N(C_a+I)=C_{\omega_a}(\o).
$$
These assertions follow in a direct manner from the elementary identity
\begin{equation}\label{phis}
\varphi_{\omega_a}\circ\varphi_a=-\varphi_{\omega_a}.
\end{equation}
In general ($a\ne 0$), $C_a$ is non-selfadjoint.
It was shown by Cowen \cite{cowen} (see also \cite{cowenmccluer}) that
$$
C_a^*=(C_{\varphi_a})^*=M_{\frac{1}{1-\bar{a}z}} C_a (M_{1-\bar{a}z})^*=M_{\frac{1}{1-\bar{a}z}} C_a T_{1-a\bar{z}},
$$
where, for $g\in L^\infty(\mathbb{T})$,  $M_g$ and $T_g$ denote, respectively, the multiplication and Toeplitz operators with symbol $g$. Equivalently,
\begin{equation}\label{cea*}
C_a^*=M_{\frac{1}{1-\bar{a}z}} C_a-a M_{\frac{1}{1-\bar{a}z}} C_a S^*,
\end{equation}
where $S^*=(M_z)^*$  (or co-shift) is the adjoint of the shift operator $S=M_z$. 
Then
\begin{equation}
C_aC_a^*=\frac{1}{1-|a|^2}(I-\bar{a}S)(I-aS^*)=\frac{1}{1-|a|^2}T_{1-\bar{a}z}T_{1-a\bar{z}}.
\end{equation}
On the other hand, it can be shown that
\begin{equation}\label{toeplitz}
C_a^*C_a=(1-|a|^2)T_{\frac{1}{|1-\bar{a}z|^2}},
\end{equation}
which also equals (see Proposition 7.5 in \cite{douglas})
\begin{equation}
C_a^*C_a=(1-|a|^2)T_{\frac{1}{1-a\bar{z}}}T_{\frac{1}{1-\bar{a}z}}.
\end{equation}
\begin{rem}\label{normas}
Using (\ref{toeplitz}), one easily obtains that the spectrum of $C_a^*C_a$ is 
$$
\sigma(C_a^*C_a)=\sigma(T_\frac{1-|a|^2}{|1-\bar{a}z|})=[\frac{1-|a|}{1+|a|},\frac{1+|a|}{1-|a|}],
$$
the image of $\mathbb{T}$ under the function $\frac{1-|a|^2}{|1-\bar{a}z|}$ (see \cite{douglas}). Since $C_a$ is invertible, we also have $\sigma(C_aC_a^*)$ equals this interval.  In particular, 
$$
\|C_a\|=\|C_a^*\|=\|C_a^*C_a\|^{1/2}=\frac{1+|a|}{1-|a|},
$$
which is well know (see \cite{cowenmccluer}).
Similarly, we obtain the spectrum and the norm of the commutator $[C_a^*,C_a]$ (note that $C_aC_a^*=(C_a^*C_a)^{-1}$):
$$
[C_a^*,C_a]=C_a^*C_a-(C_a^*C_a)^{-1}=f(C_a^*C_a),
$$
where $f(t)=t-\frac{1}{t}$ (considered in $(0,+\infty)$). Then
$$
\sigma([C_a^*,C_a])=f\left([\frac{1-|a|}{1+|a|},\frac{1+|a|}{1-|a|}]\right)=[\frac{-4|a|}{1-|a|^2}, \frac{4|a|}{1-|a|^2}],
$$
and also $\|[C_a^*,C_a]\|=\frac{4|a|}{1-|a|^2}$.
\end{rem}
\section{Spectral relations between $PQP$ and $P\pm Q$}
In this section we collect several elementary (certainly well known) results concerning the spectra of $PQP$ and $P\pm Q$ for pairs of orthogonal projections $P,Q$.
First we state properties concerning eigenvalues.  Note that $PQP$ is a positive contraction, and that $P-Q$ is a selfadjoint contraction. Chandler Davis \cite{davis} observed that the spectrum of $P-Q$ is symmetric with respect to the oirigin, in the following sense. Denote  $A=P-Q$ and put $\h'=(N(A-I)\oplus N(A+I))^\perp$. Then it is elementary that $\h'=(R(P)\cap N(Q) \oplus N(P)\cap R(Q))^\perp$ reduces both $P$ and $Q$. Denote by $P'$ and $Q'$ (and $A'=P'-Q'$)  the corresponding reductions. Then there exists a symmetry $V$ of $\h´$ such that $ VP'V=Q'$ (and therefore also $VQ'V=P'$). Thus $VA'V=-A'$, and in particular the spectrum of $A'$ is symmetric: $\lambda\in\sigma(A')$ iff $-\lambda\in\sigma(A')$, and the multiplicity function is symmetric. It follows that the spectrum of $A$ has the same property, save for the eventual eigenvalues $\pm 1$, where this symmetry could break.

 Let us state the following basic  properties. 

\begin{lem}\label{lema 54}
Suppose that $P$ and $Q$ are orthogonal projections acting in $\h$.
\begin{enumerate}
\item
 If $PQP$ has a an eigenvalue $\lambda\ne 0, 1$, then $\pm(1-\lambda^2)^{1/2}$ are eigenvalues of $P-Q$.
\item
Conversely, if $\mu\ne0,1,-1$ is an eigenvalue of $P-Q$, then $1-\mu^2$ is an eigenvalue for $PQP$
\end{enumerate}
\end{lem}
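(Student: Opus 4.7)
I would treat (1) and (2) separately, reducing each to a small finite-dimensional calculation on the non-orthogonal span of two obvious vectors.

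For (1), start from $f\ne 0$ satisfying $PQPf=\lambda f$ with $\lambda\ne 0,1$. Since $\lambda\ne 0$, automatically $f=Pf$, and therefore $PQf=PQPf=\lambda f$ as well. A quick check using $Q^2=Q$ rules out any scalar relation $Qf=\beta f$, since $\beta^2=\beta$ would force $\beta\in\{0,1\}$ and hence $\lambda\in\{0,1\}$; so $\{f,Qf\}$ are linearly independent. Consequently $\m=\mathrm{span}\{f,Qf\}$ is a (non-orthogonal) invariant subspace for both $P$ and $Q$, and in the basis $\{f,Qf\}$ the restrictions have matrices
\[
P|_\m=\left(\begin{array}{cc}1&\lambda\\0&0\end{array}\right),\qquad Q|_\m=\left(\begin{array}{cc}0&0\\1&1\end{array}\right).
\]
The characteristic polynomial of $(P-Q)|_\m$ comes out as $\mu^2-(1-\lambda)$, producing the two nonzero eigenvalues $\pm\sqrt{1-\lambda}$ with explicit eigenvectors of the form $\lambda f+(\mp\sqrt{1-\lambda}-1)\,Qf$ sitting inside $\m$.

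For (2), take $(P-Q)f=\mu f$ with $\mu\ne 0,\pm 1$ and apply $P$ and $Q$ separately on the left to extract the two relations
\[
PQf=(1-\mu)\,Pf,\qquad QPf=(1+\mu)\,Qf.
\]
A short case analysis then shows $Pf\ne 0$: otherwise $(P-Q)f=-Qf=\mu f$, i.e.\ $Qf=-\mu f$, and a second application of $Q$ yields $\mu^2 f=-\mu f$, forcing $\mu\in\{0,-1\}$ contrary to hypothesis. The eigenvalue relation now follows from the direct computation
\[
PQP(Pf)=P(QPf)=(1+\mu)\,PQf=(1+\mu)(1-\mu)\,Pf=(1-\mu^2)\,Pf.
\]

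There is no genuine obstacle in either direction; the only point requiring real care is checking that the candidate eigenvectors produced (the combinations of $f,Qf$ in part (1), and $Pf$ in part (2)) are nonzero, which is precisely where the exclusions $\lambda\ne 0,1$ and $\mu\ne 0,\pm 1$ intervene. The two parts are naturally consistent: (2) produces $\lambda=1-\mu^2$ in the spectrum of $PQP$ whenever $\mu$ is an eigenvalue of $P-Q$, and inverting gives $\mu=\pm\sqrt{1-\lambda}$, matching the conclusion obtained in (1).
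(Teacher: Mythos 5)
Your proposal follows the paper's own proof almost step for step: in (1) you pass to the two--dimensional subspace spanned by $f$ and $Qf$, verify its invariance and the independence of $f,Qf$, and write the matrices $\left(\begin{array}{cc} 1 & \lambda \\ 0 & 0\end{array}\right)$ and $\left(\begin{array}{cc} 0 & 0 \\ 1 & 1\end{array}\right)$ in that (non-orthogonal) basis, exactly as the paper does; in (2) you apply $Q$ and then $P$ to $(P-Q)f=\mu f$ and rule out $Pf=0$, again exactly as the paper does. The one point of divergence is the exponent in item (1): your computation yields the eigenvalues $\pm(1-\lambda)^{1/2}$ of $P-Q$, while the statement (and the closing sentence of the paper's proof) asserts $\pm(1-\lambda^{2})^{1/2}$. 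Your value is the correct one: the matrix $\left(\begin{array}{cc} 1 & \lambda \\ -1 & -1\end{array}\right)$ has trace $0$ and determinant $\lambda-1$, hence eigenvalues $\pm\sqrt{1-\lambda}$; this is also what consistency forces, since item (2) sends an eigenvalue $\mu$ of $P-Q$ to the eigenvalue $1-\mu^{2}$ of $PQP$, and in the Halmos model used in Lemma \ref{espectros} the spectrum of $P_0Q_0P_0$ consists of values $\cos^{2}t$ while that of $P_0-Q_0$ consists of $\pm\sin t=\pm\sqrt{1-\cos^{2}t}$. A rank-one example in $\mathbb{C}^{2}$ (with $\lambda=\cos^{2}t$) already shows that $\pm\sqrt{1-\lambda^{2}}$ are in general not eigenvalues, so the ``$\lambda^{2}$'' in the statement is a slip — it would be correct if $\lambda$ denoted an eigenvalue of $(PQP)^{1/2}$ rather than of $PQP$. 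Fortunately the only use of item (1) in the paper, ruling out nonzero eigenvalues of $\ddd_a$, needs only that $P_{N(C_a-I)}+P_{N(C_a+I)}-I$ would acquire \emph{some} eigenvalue, so nothing downstream is affected. One last trivial remark: in your basis the eigenvector of the restriction of $P-Q$ for the eigenvalue $\mu=\pm\sqrt{1-\lambda}$ is $\lambda f+(\mu-1)Qf$, so the $\pm/\mp$ pairing in your displayed eigenvectors is reversed; this has no bearing on the argument.
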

\begin{proof}
Suppose $PQP f=\lambda f$ for $\lambda\ne 0,1$ and $\|f\|=1$. Then $f\in R(P)$ and therefore the subspace $\v$ generated  by $f$ and $Q f$  is invariant  for $P$ and $Q$:
$$
P f=f \ ; \ PQ f=PQP f=\lambda f \ \ \hbox{ and } \ \ Q f
$$ 
belong to $\v$. Then $P-Q$ is a selfadjoint operator acting in $\v$, which is two dimensional (if $Q f$ where a multiple of $f$, then either $Q f=0$ and then $P QP f=PQ f=0$; or $Q f =\alpha f$ and thus $f\in R(P)\cap R(Q)$ and therefore  $PQ P f=f$). 
Then $\{f, Q f\}$ is a basis for $\v$ and the matrices of $P$, $Q$ and $P-Q$ as operators in $\v$ for this basis are, respectively:
$$
\left( \begin{array}{cc} 1 & \lambda \\ 0 & 0 \end{array} \right) , \ \left( \begin{array}{cc} 0 & 0 \\ 1 & 1 \end{array} \right) \ \hbox{ and } \left( \begin{array}{cc} 1 & \lambda \\ -1 & -1 \end{array} \right).
$$
Note that $\lambda\in(0,1)$: $PQ P\ge 0$. The eigenvalues of the third matrix are $-(1-\lambda^2)^{1/2}$ and $(1-\lambda^2)^{1/2}$.

For the second statement, suppose that $Pg-Qg=\mu g$, for $\mu\ne 0, \pm 1$, i.e., \begin{equation}\label{uno}
Qg=Pg-\mu g.
\end{equation}
Then, applying $Q$ one has  $Qg=QPg-\mu Qg$, i.e. $Qg=\frac{1}{\mu+1}g$. Then, substituting this identity in (\ref{uno}), we get
$$
\frac{1}{\mu+1}QPg=Pg-\mu g, 
$$
and applying $P$: $\frac{1}{\mu+1}PQPg=(1-\mu)Pg$, i.e.
$$
PQP g=(1-\mu^2)Pg.
$$
Note that $Pg\ne 0$: $Pg=0$ would imply $Qg=-\mu g$, i.e. $\mu=0$ or $\mu=-1$.
 \end{proof}
\begin{rem}\label{bobo}
Note that if  $f\ne 0$,   $PQPf=f$  if and only if $f\in R(P)\cap R(Q)$. Sufficiency is trivial. Necessity: pick $\|f\|=1$; $PQPf=f$ implies $f\in R(P)$, and thus $PQf=f$. Then 
$$
1=\langle PQf,f\rangle=\langle Qf,Pf\rangle=\langle Qf,f\rangle,
$$
which clearly implies $f\in R(Q)$.
\end{rem}   
The following result can be verified along the same lines as the above lemma. We include the elementary proof.
\begin{lem}\label{lema 55}
Let $P, Q$ be orthogonal projections. Then $\lambda$ is an eigenvalue of $P-Q$ with $|\lambda|<1$ if and only if $1\pm (1-\lambda^2)^{1/2}$ are eigenvalues of $P+Q$.
\end{lem}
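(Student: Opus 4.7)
The plan is to mirror the two-dimensional invariant-subspace argument used for Lemma \ref{lema 54}, this time reading eigenvalues of $P+Q$ rather than of $PQP$ from a suitable $2\times 2$ matrix. For the forward direction, assume $(P-Q)f=\lambda f$ with $0<|\lambda|<1$. The relation $Qf=Pf-\lambda f$ together with $P^2=P$, $Q^2=Q$ quickly yields
$$PQf=(1-\lambda)Pf,\qquad QPf=(1+\lambda)Qf,$$
so $\v:=\mathrm{span}\{Pf,Qf\}$ is invariant under both $P$ and $Q$. A short case check rules out $Pf=0$, $Qf=0$ and $Pf\parallel Qf$ (each of which would force $\lambda\in\{0,\pm 1\}$), giving $\dim\v=2$. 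In the basis $\{Pf,Qf\}$ the restriction of $P+Q$ to $\v$ has matrix
$$\begin{pmatrix} 1 & 1-\lambda \\ 1+\lambda & 1\end{pmatrix},$$
whose characteristic equation $(1-\mu)^2=1-\lambda^2$ furnishes the two distinct eigenvalues $\mu=1\pm\sqrt{1-\lambda^2}$.

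For the converse, suppose $(P+Q)g=\mu g$ with $\mu=1\pm\sqrt{1-\lambda^2}$, so that $\mu\notin\{0,1,2\}$. The symmetric computation based on $Pg=\mu g-Qg$ yields $PQg=(\mu-1)Pg$ and $QPg=(\mu-1)Qg$; thus $\mathrm{span}\{Pg,Qg\}$ is again a two-dimensional $(P,Q)$-invariant subspace, and the matrix of $P-Q$ in the basis $\{Pg,Qg\}$ is
$$\begin{pmatrix} 1 & \mu-1 \\ -(\mu-1) & -1\end{pmatrix},$$
which has trace $0$ and determinant $-(1-(\mu-1)^2)=-\lambda^2$, hence eigenvalues $\pm\lambda$.

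The routine part is the $2\times 2$ eigenvalue computation; the only real obstacle is the bookkeeping needed to verify two-dimensionality of $\v$, exactly paralleling the same subtlety in Lemma \ref{lema 54}, and this is what the hypothesis $|\lambda|<1$ (with $\lambda\neq 0$ tacitly excluded, as in that lemma) is there to ensure. A more conceptual but equivalent reformulation observes the identity $(P+Q-I)^2=I-(P-Q)^2$ together with the anticommutation $(P-Q)(P+Q-I)=-(P+Q-I)(P-Q)$: these force $(P+Q-I)f$ to lie in the $(-\lambda)$-eigenspace of $P-Q$ and span, with $f$, a two-dimensional reducing subspace on which $P+Q-I$ has spectrum $\{\pm\sqrt{1-\lambda^2}\}$, recovering the claim in one stroke.
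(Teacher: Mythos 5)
Your main argument is correct and is essentially the paper's own: for $\lambda\neq 0,\pm 1$ you form the two-dimensional subspace invariant under both projections and read the eigenvalues off a $2\times 2$ matrix. The only cosmetic differences are that you use the basis $\{Pf,Qf\}$ where the paper uses $\{f,Qf\}$ (your matrices and the case check for two-dimensionality are correct), and that you actually write out the converse, which the paper dismisses with ``The converse is similar.'' Your closing observation, that $(P+Q-I)^2=I-(P-Q)^2$ together with the anticommutation of $P-Q$ and $P+Q-I$ yields the result in one stroke, is also correct and is a tidier packaging of the same computation.

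One caveat: $\lambda=0$ is \emph{not} tacitly excluded by the statement --- the hypothesis is only $|\lambda|<1$ --- so your proof omits a case that the paper treats separately, using $N(P-Q)=\bigl(R(P)\cap R(Q)\bigr)\oplus\bigl(N(P)\cap N(Q)\bigr)$ to produce eigenvectors of $P+Q$ for the values $0$ and $2$. You should either add that short case or explicitly flag its exclusion. Note in passing that this borderline case is genuinely delicate: if, say, $P=Q=I$, then $0$ is an eigenvalue of $P-Q$ while $0$ is not an eigenvalue of $P+Q=2I$; so the paper's own three-line treatment implicitly assumes both summands of the kernel are nontrivial, and your instinct to keep $\lambda\neq 0$ is mathematically reasonable even though it departs from the literal statement.
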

\begin{proof}
Let $\lambda$ be an eigevalue of $P-Q$.
First suppose that $\lambda=0$, and $(P-Q)f=0$ for $f\ne 0$. Then  $f\in R(P)\cap R(Q)\oplus N(P)\cap N(Q)$. Thus $f=f_1+f_0$ with $Pf_1=f_1=Qf_1$ and $Pf_0=0=Qf_0$. Then $(P+Q)f_1=2f_1$ and $(P+Q)f_0=0$, i.e. $1\pm (1-0)^{1/2}$ are eigenvalues of $P+Q$. 

Suppose now that $(P-Q)f=\lambda f$ with $\lambda\ne -1, 0, 1$. Consider, as in the proof above,  the subspace $\v$ generated by $f$ and $Qf$.  Note that $\{f, Qf\}$ are linearly independent: if $Qf=\alpha f$, then either $\alpha=0$ or $\alpha=1$. If $\alpha=0$, then $\lambda f=(P-Q)f=Pf$ (and $\lambda\ne 0$) imply $\lambda=1$ (a contradiction); if $\alpha=1$, then $\lambda f=(P-Q)f=Pf-f$ implies $Pf=(1+\lambda)f$, i.e. $\lambda=-1$ (again a contradiction). Thus $\{f,Qf\}$ is a basis for $\v$. 
Note
that $(P+Q)f=(P-Q)f+2Qf=\lambda f+2 Qf$. On the other hand, $(P-Q)f=\lambda f$ implies that $Pf=\lambda f+Qf$, and thus $Pf=P(\lambda f+Qf)=\lambda Pf+PQf$ and thus $PQf=(1-\lambda)Pf=(1-\lambda)(\lambda f+Qf)=(\lambda-\lambda^2)f+(1-\lambda)Qf$.
Then $(P+Q)Qf=PQf+Qf=(\lambda-\lambda^2)f+(2-\lambda)Qf$.
Therefore $(P+Q)\v\subset\v$ and its matrix in the basis $\{f,Qf\}$ is 
$$
\left(\begin{array}{cc} \lambda & \lambda-\lambda^2 \\ 2 & 2-\lambda \end{array}\right),
$$
 whose eigenvalues are $1\pm (1-\lambda^2)^{1/2}$. 

The converse is similar.
\end{proof}
Now we focus on arbitrary spectral values, not necessarily eigenvalues. To this effect, we shall need P. Halmos \cite{halmos} results on pairs of subspaces / projections. Fix $P,Q$ orthogonal projections. consider the folowing natural orthogonal decomposition of $\h$:
$$
R(P)\cap R(Q)\  \oplus \ N(P)\cap N(Q)\ \oplus \ R(P)\cap N(Q) \ \oplus \ N(P)\cap R(Q) \ \oplus \ \h_0.
$$
The space $\h_0$ is usually called the generic part of $P$ and $Q$. Clearly this decomposition reduces both $P$ and $Q$. Note that in this decomposition,
$$
P=I \ \oplus \ 0 \ \oplus \  I \ \oplus \ 0 \ \oplus \ P_0 \ \  \hbox{ and } \ \ 
Q= I \ \oplus \ 0 \ \oplus \ 0 \ \oplus \ I \ \oplus \ Q_0.
$$
So that 
$$
PQP= I\ \oplus \ 0 \ \oplus \ 0\ \oplus \ 0 \ \oplus \ P_0Q_0P_0,
$$
$$
P-Q=0\ \oplus \ 0\ \oplus \ I\ \oplus \ -I\ \oplus \ P_0-Q_0.
$$
and 
$$
P+Q=2\ \oplus \ 0\ \oplus \ I\ \oplus \ I\ \oplus \ P_0+Q_0.
$$
Therefore, in order to analyze the spectra of $PQP$, $P-Q$ and $P+
Q$ we need to focus on the operators acting in the generic part $\h_0$.

Continuing with Halmos' theory, he proved that there exists a unitary isomorphim between $\h_0$ and a product Hilbert space $\l\times \l$, and a positive operator $X$ acting in $\l$, with $\|X\|\le\pi/2$ and $N(X)=\{0\}$, such that the projections $P_0$ and $Q_0$ are carried to
$$
P_0\simeq\left(\begin{array}{cc} I & 0 \\ 0 & 0 \end{array}\right) \ \hbox{ and } \ Q_0\simeq \left(\begin{array}{cc} C^2 & CS \\ CS & S^2 \end{array}\right),
$$
where $C=\cos(X)$ and $S=\sin(X)$.

Note in particular that $\sigma(X)\subset[0,\pi/2]$.

\begin{lem}\label{espectros}
Let $P_0$ and $Q_0$ as above and let $\lambda\ne 0$. Then 
\begin{enumerate}
\item
$$
\lambda\in\sigma(P_0Q_0P_0) \iff \pm\sqrt{1-\lambda^2}\in\sigma(P_0-Q_0).
$$
\item
$$
\lambda\in\sigma(P_0Q_0P_0) \iff 1\pm \lambda^{1/2}\in\sigma(P_0+Q_0).
$$
\end{enumerate}
\end{lem}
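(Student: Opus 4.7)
The plan is to work in the Halmos parameterization of $\h_0$ introduced immediately before the statement. On $\h_0 \simeq \l\oplus\l$, the three operators take the explicit block-matrix forms
\[
P_0Q_0P_0 \simeq \left(\begin{array}{cc} C^2 & 0 \\ 0 & 0 \end{array}\right),\
P_0 - Q_0 \simeq \left(\begin{array}{cc} S^2 & -CS \\ -CS & -S^2 \end{array}\right),\
P_0 + Q_0 - I \simeq \left(\begin{array}{cc} C^2 & CS \\ CS & -C^2 \end{array}\right),
\]
where $C=\cos X$, $S=\sin X$, they commute, and $C^2+S^2=I$. My first step would be to square the last two matrices: a short computation using $C^2+S^2=I$ collapses them to block-diagonal scalar multiples of $I$, giving $(P_0-Q_0)^2 \simeq S^2 \oplus S^2$ and $(P_0+Q_0-I)^2 \simeq C^2 \oplus C^2$. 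Since both $P_0-Q_0$ and $P_0+Q_0-I$ are selfadjoint, the spectral mapping theorem yields $\{\mu^2 : \mu\in\sigma(P_0-Q_0)\} = \sigma(S^2)$ and $\{\nu^2 : \nu\in\sigma(P_0+Q_0-I)\} = \sigma(C^2)$.

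The second step is to show that each of $\sigma(P_0-Q_0)$ and $\sigma(P_0+Q_0-I)$ is symmetric about the origin, so that $s\ge 0$ with $s^2$ in either squared spectrum forces both $\pm s$ into the original. For $P_0-Q_0$ this is the Davis symmetry recorded at the start of Section~3. For $P_0+Q_0-I$ the Davis argument does not apply, so instead I would diagonalize the block matrix directly: because its entries commute (they all lie in the abelian $\c^*$-algebra generated by $X$), the selfadjoint $2\times 2$ matrix of zero block-trace and operator-determinant $-C^2$ is unitarily equivalent, over that algebra, to $\mathrm{diag}(C,-C)$; similarly $P_0-Q_0 \simeq \mathrm{diag}(S,-S)$. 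This identifies $\sigma(P_0-Q_0) = \sigma(S)\cup -\sigma(S)$ and $\sigma(P_0+Q_0-I) = \sigma(C)\cup -\sigma(C)$.

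Finally I would chain the equivalences. The block form of $P_0Q_0P_0$ gives $\sigma(P_0Q_0P_0)\setminus\{0\} = \sigma(C^2)\setminus\{0\}$, and spectral mapping applied to the positive operator $X$ via $\sin^2+\cos^2=1$ furnishes the bijection $c\leftrightarrow\sqrt{1-c^2}$ between $\sigma(C)$ and $\sigma(S)$. For $\lambda\ne 0$, therefore, $\lambda\in\sigma(P_0Q_0P_0)$ is equivalent to $\sqrt\lambda\in\sigma(C)$; this translates into $\pm\sqrt\lambda\in\sigma(P_0+Q_0-I)$, i.e.\ $1\pm\sqrt\lambda\in\sigma(P_0+Q_0)$, proving (2), and into $\sqrt{1-\lambda}\in\sigma(S)$, i.e.\ $\pm\sqrt{1-\lambda}\in\sigma(P_0-Q_0)$, proving (1). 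The main obstacle I anticipate is justifying the symmetry of $\sigma(P_0+Q_0-I)$ about $0$, which the Davis argument from the preamble does not supply; the explicit $2\times 2$ diagonalization inside the abelian $\c^*$-algebra generated by $X$ is what closes that gap, after which the remaining steps are routine matrix arithmetic and spectral mapping.
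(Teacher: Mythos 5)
Your outline is essentially correct and it takes a genuinely different route from the paper. The paper does not square anything: it writes $P_0-Q_0$ and $P_0+Q_0-I$ as a commuting product of a positive block-diagonal operator ($S\oplus S$, resp. $C\oplus C$) with an explicit symmetry, obtains the inclusion $\sigma(P_0-Q_0)\subset\{\pm\sin t:t\in\sigma(X)\}$ (resp. $\{\pm\cos t\}$) from $\sigma(ab)\subset\sigma(a)\sigma(b)$ for commuting $a,b$, and gets the reverse inclusion by noting that if $P_0-Q_0-\lambda$ is invertible then the square of the second factor is positive invertible, so its diagonal entries $(S\pm\lambda)^2$ (resp. $(C\pm\lambda)^2$) are invertible. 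Your squaring-plus-spectral-mapping step replaces all of this very economically, but by itself it only shows that for each spectral value one of the two signs occurs; the whole content of the ``$\pm$'' in the statement is the symmetry of $\sigma(P_0-Q_0)$ and of $\sigma(P_0+Q_0-I)$ about $0$, and that is exactly the point you must secure. For $P_0-Q_0$ your appeal to Davis is fine, provided you say why there are no exceptional eigenvalues $\pm1$: on the generic part $R(P_0)\cap N(Q_0)=N(P_0)\cap R(Q_0)=\{0\}$.

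The one step whose justification does not stand as written is the symmetry of $\sigma(P_0+Q_0-I)$. The principle you invoke --- a selfadjoint $2\times2$ matrix with entries in a commutative C$^*$-algebra, zero trace and determinant $-C^2$ is unitarily equivalent, over that algebra, to $\mathrm{diag}(C,-C)$ --- is false in general: continuous diagonalization over commutative C$^*$-algebras can fail (eigenvector winding; the standard example over $C(S^2)$, or spiralling eigenvectors already over an interval). Two easy repairs. (i) Davis does apply here after all: $P_0+Q_0-I=P_0-(I-Q_0)$ is again a difference of orthogonal projections, and its $\pm1$ eigenspaces $R(P_0)\cap R(Q_0)$ and $N(P_0)\cap N(Q_0)$ vanish on the generic part, so the spectrum is symmetric --- this makes the two cases exactly parallel. (ii) Alternatively, make the diagonalization explicit: conjugation by the unitary $\left(\begin{array}{cc}\cos(X/2) & -\sin(X/2)\\ \sin(X/2) & \cos(X/2)\end{array}\right)$, whose entries are functions of the single operator $X$, carries $\mathrm{diag}(C,-C)$ to $\left(\begin{array}{cc}C^2 & CS\\ CS & -C^2\end{array}\right)$, and an analogous rotation (by $(X-\pi/2)/2$) handles $P_0-Q_0$; the trace/determinant remark alone is not a proof. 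Finally, note that your chain ends with $\pm\sqrt{1-\lambda}\in\sigma(P_0-Q_0)$, not $\pm\sqrt{1-\lambda^2}$ as printed in item (1); this agrees with what the paper's own proof actually yields ($\lambda=\cos^2t$, $\pm\sin t\in\sigma(P_0-Q_0)$), so you are silently correcting a typo in the statement --- say so explicitly rather than leaving the mismatch unremarked.
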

\begin{proof}
1. We can reason with the operators in $\l\times\l$. Then  
$$
P_0Q_0P_0\simeq\left(\begin{array}{cc} C^2 & 0 \\ 0 & 0 \end{array}\right),
$$
and thus $\sigma(P_0Q_0P_0)=\{\cos^2(t): t\in\sigma(X)\}\cup\{0\}$.
Also
$$
P_0-Q_0\simeq\left(\begin{array}{cc} S^2 & -CS \\ -CS & -S^2 \end{array}\right)=\left(\begin{array}{cc} S & 0 \\ 0 & S \end{array}\right)\left(\begin{array}{cc} S & -C \\ -C & -S \end{array}\right),
$$
where the last two matrices commute (we have used that $C$ and $S$ commute). Note that the second matrix is a symmetry:
$$
\left(\begin{array}{cc} S & -C \\ -C & -S \end{array}\right)^*=\left(\begin{array}{cc} S & -C \\ -C & -S \end{array}\right) \ \hbox{ and }\ \left(\begin{array}{cc} S & -C \\ -C & -S \end{array}\right)^2=\left(\begin{array}{cc} I & 0 \\ 0 & I \end{array}\right).
$$
It is well known that if $a,b$ are elements of a C$^*$-algebra such that  $ab=ba$, then $\sigma(ab)\subset \{\lambda\mu: \lambda\in\sigma(a) , \mu\in\sigma(b)\}$. Therefore, since $\sigma\left(\left(\begin{array}{cc} S & -C \\ -C & -S \end{array}\right)\right)=\{-1; 1\}$, we have that 
$$
\sigma(P_0-Q_0)\subset \{\pm \lambda: \lambda\in\sigma\left(\begin{array}{cc} S & 0  \\ 0 & S \end{array}\right)\}=\{\pm \sin(t): t\in\sigma(X)\}.
$$
Conversely, suppose that $P_0-Q_0-\lambda 1$ is invertible. Since
$$
P_0-Q_0-\lambda 1\simeq\left(\begin{array}{cc} S & -C \\ -C & -S \end{array}\right)\{\left(\begin{array}{cc} S & 0 \\ 0 & S \end{array}\right) -\lambda\left(\begin{array}{cc} S & -C \\ -C & -S \end{array}\right)\},
$$
this means that $\left(\begin{array}{cc} S & 0 \\ 0 & S \end{array}\right) -\lambda\left(\begin{array}{cc} S & -C \\ -C & -S \end{array}\right)$ is invertible. Thus the square of this operator
$$
\left(\begin{array}{cc} S^2 & 0 \\ 0 & S^2 \end{array}\right) -2\lambda\left(\begin{array}{cc} S & -C \\ -C & -S \end{array}\right) +\lambda^2 \left(\begin{array}{cc} S^2 & 0 \\ 0 & S^2 \end{array}\right)
$$
is positive and invertible. Therefore the diagonal entries are positive and invertible, i.e.,
$S^2\pm2\lambda S+\lambda^2I=(S\pm\lambda I)^2$ is invertible. That is, $\lambda\ne\pm \sin(t)$ for $t\in\sigma(X)$.

2. The proof is similar. Note that 
$$
P_0+Q_0\simeq\left(\begin{array}{cc} I+C^2 & CS \\ CS & S^2\end{array} \right)=\left(\begin{array}{cc} I & 0 \\ 0 & I\end{array} \right) + \left(\begin{array}{cc} C^2 & CS \\ CS & -C^2\end{array} \right)
$$
and that 
$$
\left(\begin{array}{cc} C^2 & CS \\ CS & -C^2\end{array} \right)=\left(\begin{array}{cc} C & 0 \\ 0 & C\end{array} \right)\left(\begin{array}{cc} C & S \\ S & -C\end{array} \right).
$$
The right hand matrices commute, and the matrix $\left(\begin{array}{cc} C & S \\ S & -C\end{array} \right)$ is also a symmetry. Thus, with the same argument as above, we have that 
$$
\sigma\left(\left(\begin{array}{cc} C^2 & CS \\ CS & -C^2\end{array} \right)\right)\subset \{\pm \cos(t): t\in\sigma(X)\}.
$$
Also note that 
$$
\left(\begin{array}{cc} C^2 & CS \\ CS & -C^2\end{array} \right)-\lambda\left(\begin{array}{cc} I & 0 \\ 0 & I\end{array} \right)=
\left(\begin{array}{cc} C & S \\ S & -C\end{array} \right)\{ \left(\begin{array}{cc} C & 0 \\ 0 & C\end{array} \right)-\lambda \left(\begin{array}{cc} C & S \\ S & -C\end{array} \right)\}.
$$
This product is invertible if and only if the right hand factor is invertible, which implies that its square is positive and invertible, and therefore the diagonal entries of this square are invertible, i.e. $(C\pm\lambda I)^2$ are invertible. 
Therefore
$$
\sigma(P_0+Q_0)=\sigma\left( \left(\begin{array}{cc} I & 0 \\ 0 & I\end{array} \right) + \left(\begin{array}{cc} C^2 & CS \\ CS & -C^2\end{array} \right)\right)=\{1\pm \cos(t): t\in\sigma(X)\}.
$$ 
\end{proof}

 \section{Projections onto the eigenspaces of $C_a$}
Recall from \cite{ando} the formulas for the orthogonal projections onto the range and the null space of an oblique projection $Q$:
$$
P_{R(Q)}=Q(Q+Q^*-I)^{-1} \ \hbox{ and } \ P_{N(Q)}=(I-Q)(I-Q-Q^*)^{-1}.
$$
For the oblique projection $\frac12(I-C_a)$, whose range and nullspace are, respectively,  the (non orthogonal) eigenspaces $N(C_a-I)$ and $N(C_a+I)$, we have:
\begin{equation}\label{formulas proyectores}
P_{N(C_a-I)}=(I+C_a)(C_a+C_a^*)^{-1} \ \hbox{ and } \ P_{N(C_a+I)}=(C_a-I)(C_a+C_a^*)^{-1}.
\end{equation}

Denote by 
$$
\ddd_a:=P_{N(C_a-I)}P_{N(C_a+I)}P_{N(C_a-I)}.
$$
We shall compute  the norm $\|P_{N(C_a-I)}P_{N(C_a+I)}\|=\|\ddd_a\|^{1/2}$ below, and further study the spectrum of $\ddd_a$.

The following result will be useful:
\begin{teo}\label{cuenta P+Q}
Let $a\in\mathbb{D}$. Then 
$$
\sigma(P_{N(C_a-I)}+P_{N(C_a+I)})=[1-|a|, 1+|a|].
$$
Moreover, none of these spectral values are eigenvalues.
\end{teo}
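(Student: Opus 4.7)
The plan is to express $P+Q := P_{N(C_a-I)}+P_{N(C_a+I)}$ as a simple function of $C_a^*C_a$, reducing the problem to Remark~\ref{normas}. Adding the two formulas in \eqref{formulas proyectores} telescopes to $P+Q = 2\,C_a(C_a+C_a^*)^{-1}$. Since $C_a^2=I$, one has the factorization $C_a+C_a^* = (I+C_a^*C_a)\,C_a$, so $(C_a+C_a^*)^{-1} = C_a(I+C_a^*C_a)^{-1}$, and therefore
$$
P+Q \;=\; 2\,C_a^{\,2}(I+C_a^*C_a)^{-1} \;=\; 2\,(I+C_a^*C_a)^{-1}.
$$
In particular $P+Q$ is manifestly self-adjoint and invertible.

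The spectrum is now computed by the spectral mapping theorem, applied to the decreasing homeomorphism $t\mapsto 2/(1+t)$ and to the interval $\sigma(C_a^*C_a) = [\frac{1-|a|}{1+|a|},\frac{1+|a|}{1-|a|}]$ given by Remark~\ref{normas}: the endpoints map to $1+|a|$ and $1-|a|$ respectively, so $\sigma(P+Q)=[1-|a|,1+|a|]$.

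For the eigenvalue statement, the identity $P+Q = 2(I+C_a^*C_a)^{-1}$ shows that $\lambda\in\sigma(P+Q)$ is an eigenvalue of $P+Q$ if and only if $(2-\lambda)/\lambda$ is an eigenvalue of $C_a^*C_a = (1-|a|^2)T_{1/|1-\bar{a}z|^2}$. For $a\ne 0$ it therefore suffices to check that the Toeplitz operator with symbol $\phi = 1/|1-\bar{a}z|^2$ has no point spectrum. This is the classical argument: if $T_{\phi-\mu}f=0$ for $f\in H^2$, then $(\phi-\mu)f \perp H^2$ in $L^2(\mathbb{T})$, so testing against $z^n f$ for every $n\ge 0$ shows that all non-negative Fourier coefficients of the real-valued $L^1$ function $(\phi-\mu)|f|^2$ vanish; by reality every Fourier coefficient vanishes, hence $(\phi-\mu)|f|^2 = 0$ a.e. Since $\phi$ is a nonconstant real-analytic function of the angle on $\mathbb{T}$, the level set $\{\phi=\mu\}$ is finite, and we conclude $f=0$.

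The main obstacle is the no-eigenvalue part: the spectrum calculation is essentially formal once one spots the factorization $C_a+C_a^*=(I+C_a^*C_a)\,C_a$, but ruling out point spectrum forces one to invoke the standard fact that Toeplitz operators with nonconstant real $L^\infty$ symbol have trivial kernel, via the Fourier-coefficient argument sketched above.
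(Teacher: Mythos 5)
Your proof is correct and follows essentially the same route as the paper: both reduce via \eqref{formulas proyectores} and $C_a^2=I$ to the identity $P_{N(C_a-I)}+P_{N(C_a+I)}=2(I+C_a^*C_a)^{-1}$ and then read off the spectrum from that of the Toeplitz operator $T_{1/|1-\bar{a}z|^2}$. The only difference is that you spell out the Hartman--Wintner-type argument showing that a Toeplitz operator with nonconstant real symbol has no eigenvalues, a fact the paper's proof invokes implicitly in its final sentence.
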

\begin{proof}
It follows from (\ref{formulas proyectores}) that 
$$
P_{N(C_a-I)}+P_{N(C_a+I)}=2C_a(C_a+C_a^*)^{-1}.
$$
Note that the inverse of the  element $C_a(C_a+C_a^*)^{-1}$ is (using (\ref{toeplitz}))
$$
(C_a+C_a^*)C_a=I+C_a^*C_a=I+(1-|a|^2)T_{\frac{1}{|1-\bar{a}z|^2}}.
$$
The spectrum of $T_{\frac{1}{|1-\bar{a}z|^2}}$ is the image of its symbol \cite{douglas}:
$$
\sigma(T_{\frac{1}{|1-\bar{a}z|^2}})=[\frac{1}{(1+|a|)^2},\frac{1}{(1-|a|)^2}].
$$
Then 
$$
\sigma((I+C_a^*C_a)^{-1})=[\frac{1-|a|}{2},\frac{1+|a|}{2}]
$$
and therefore
$$
\sigma(P_{N(C_a-I)}+P_{N(C_a+I)})=[1-|a|, 1+|a|].
$$
An eigenvalue of $P_{N(C_a-I)}+P_{N(C_a+I)}$ would yield an eigenvalue of $T_{\frac{1}{|1-\bar{a}z|^2}}$.
\end{proof}
In particular, we have that $\|P_{N(C_a-I)}+P_{N(C_a+I)}\|=1+|a|$.
We can compute the norm of $\ddd_a$:
\begin{prop}
The operator $\ddd_a$ has no (non nil) eigenvalues in $H^2$. Moreover.
$$
\|\ddd_a\|=|a|^2.
$$
\end{prop}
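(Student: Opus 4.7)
The plan is to combine Theorem \ref{cuenta P+Q} with the Halmos decomposition displayed before Lemma \ref{espectros} and with part (2) of that lemma. The degenerate case $a=0$ can be disposed of immediately: then $P_{N(C_0-I)}=P_\e$ and $P_{N(C_0+I)}=P_\o$ are complementary orthogonal projections, so $\ddd_0=0$, which trivially has no non-zero eigenvalues and has norm $0=|0|^2$. From now on I assume $a\ne 0$ and write $P=P_{N(C_a-I)}$, $Q=P_{N(C_a+I)}$.

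The first key step is to show that in the Halmos decomposition the four non-generic summands vanish, so that (up to unitary equivalence) $\h_0=H^2$ and $P,Q$ coincide with their generic parts $P_0,Q_0$. On these summands $P+Q$ acts as $2$, $0$, $1$, $1$ respectively. Since $0<|a|<1$ gives $2,0\notin[1-|a|,1+|a|]$, the summands $R(P)\cap R(Q)$ and $N(P)\cap N(Q)$ must be trivial by Theorem \ref{cuenta P+Q}. The remaining two summands $R(P)\cap N(Q)$ and $N(P)\cap R(Q)$ would produce $1$ as an eigenvalue of $P+Q$, contradicting the eigenvalue-free assertion of the same theorem. Having reduced everything to the generic part, I apply Lemma \ref{espectros}(2) with $P_0=P$, $Q_0=Q$: for $\lambda>0$, $\lambda\in\sigma(\ddd_a)$ iff $1\pm\sqrt{\lambda}\in\sigma(P+Q)=[1-|a|,1+|a|]$, that is, iff $\sqrt{\lambda}\le|a|$. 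Therefore $\sigma(\ddd_a)=[0,|a|^2]$, and in particular $\|\ddd_a\|=|a|^2$.

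It remains to rule out non-zero eigenvalues. The value $1$ cannot be an eigenvalue because Remark \ref{bobo} would force the eigenvector into $R(P)\cap R(Q)=\{0\}$. If $\lambda\in(0,1)$ were an eigenvalue of $\ddd_a$, then Lemma \ref{lema 54}(1) would produce $\pm\sqrt{1-\lambda^2}\in(-1,1)\setminus\{0\}$ as eigenvalues of $P-Q$; feeding these into Lemma \ref{lema 55} would exhibit $1\pm\sqrt{1-(1-\lambda^2)}=1\pm\lambda$ as eigenvalues of $P+Q$, once again contradicting Theorem \ref{cuenta P+Q}.

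The conceptually delicate point — and the place where the second, ``no eigenvalues'' half of Theorem \ref{cuenta P+Q} really earns its keep — is the collapse of the Halmos decomposition onto its generic part; once this is in hand, the computation of $\sigma(\ddd_a)$ is a direct translation through Lemma \ref{espectros}(2), and ruling out eigenvalues is a routine cascade through Lemmas \ref{lema 54} and \ref{lema 55}.
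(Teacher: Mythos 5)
Your argument is correct as a deduction from the results you invoke, but it follows a genuinely different route from the paper's. For the norm, the paper does not compute $\sigma(\ddd_a)$ at this stage at all: it takes $\|P+Q\|=1+|a|$ from Theorem \ref{cuenta P+Q} and quotes the Duncan--Taylor identity $\|P+Q\|=1+\|PQ\|$, so $\|PQ\|=|a|$ and $\|\ddd_a\|=\|PQ\|^2=|a|^2$; the full spectrum $[0,|a|^2]$ is only derived in the following proposition. For the eigenvalues, the paper applies Lemma \ref{lema 54} once to the pair $P$, $Q^\perp$ (an eigenvalue $\lambda\in(0,1)$ of $\ddd_a$ gives the eigenvalue $1-\lambda$ for $PQ^\perp P$, hence eigenvalues for $P-Q^\perp=P+Q-I$), while you cascade through Lemma \ref{lema 54}(1) and then Lemma \ref{lema 55}; both variants end at an eigenvalue of $P+Q$, contradicting Theorem \ref{cuenta P+Q}, and your cascade is even insensitive to the exponent slip in the statement of Lemma \ref{lema 54}(1), where $\pm(1-\lambda^2)^{1/2}$ should read $\pm(1-\lambda)^{1/2}$. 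What your route buys is the stronger statement $\sigma(\ddd_a)=[0,|a|^2]$ in one pass; what the paper's route buys is independence from any analysis of the non-generic Halmos summands.

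That last point is where you should be careful. Your claim that \emph{all four} non-generic summands vanish, in particular $\m_{01}=N(C_a-I)\cap N(C_a+I)^\perp=\{0\}$, does follow from the no-eigenvalue half of Theorem \ref{cuenta P+Q} exactly as you argue, but it flatly contradicts (\ref{61}) (quoted from Prop.\ 6.1 of the reference), which asserts $\dim\m_{01}=1$ and which the paper itself uses when it computes $\sigma(\ddd_a)$ in the next proposition, keeping that one-dimensional summand. The two statements cannot both hold: a unit vector in $\m_{01}$ is an eigenvector of $P+Q$ with eigenvalue $1$. So your proof is valid relative to Theorem \ref{cuenta P+Q} as stated, but you are silently taking sides in an internal inconsistency of the paper, and you should at least remark on it. A formulation that makes your argument immune to this issue: $\m_{00}=\m_{11}=\{0\}$ for elementary reasons (the eigenspaces are complementary), and on $\m_{01}\oplus\m_{10}$ the operator $\ddd_a=PQP$ acts as $0$; hence, whatever the dimension of $\m_{01}$, one has $\sigma(\ddd_a)=\{0\}\cup\sigma(\ddd_a^0)$, and Lemma \ref{espectros}(2) applied on the generic part (where $\sigma(P_0+Q_0)$ is still $[1-|a|,1+|a|]$, since the only possible discrepancy with $\sigma(P+Q)$ is the non-isolated interior point $1$) gives $\|\ddd_a\|=|a|^2$, with the eigenvalue exclusion running exactly as you wrote it.
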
 
\begin{proof}
Let $\lambda\ne 0$ be an eigenvalue of $\ddd_a$:
$\ddd_a f=\lambda f$, for $\|f\|=1$. Since clearly $\|\ddd_a\|\le 1$, it must be $|\lambda|\le 1$. Note that $\ddd_af=f$ would imply (see Remark \ref{bobo}) that $f\in N(C_a-I)\cap N(C_a+I)=\{0\}$. Thus $\lambda<1$. 

Since $f\in R(P_{N(C_a-I)})$,
$$
P_{N(C_a-I)}P_{N(C_a+I)}^\perp P_{N(C_a-I)}f=(P_{N(C_a-I)}-P_{N(C_a-I)}P_{N(C_a+I)}P_{N(C_a-I)})f
$$
$$
=(I-P_{N(C_a-I)}P_{N(C_a+I)}P_{N(C_a+I)})f=(1-\lambda)f.
$$
Then using Lemma \ref{lema 54} with $P=P_{N(C_a-I)}$ and $Q=P_{N(C_a+I)}^\perp$, we get that  $\pm\left(1-(1-\lambda)^2\right)^{1/2}$ are eigenvalues of 
$$ 
P-Q^\perp=P_{N(C_a-I)}-P_{N(C_a+I)}^\perp=P_{N(C_a-I)}+P_{N(C_a+I)}-I.
$$
But it was shown in Theorem \ref{cuenta P+Q} that $P_{N(C_a-I)}+P_{N(C_a+I)}$ has no eigenvalues.

From Theorem \ref{cuenta P+Q} we know also that $\sigma(P_{N(C_a-I)}+P_{N(C_a+I)})=[1-|a|, 1+|a|]$. Therefore, $\|P_{N(C_a-I)}+P_{N(C_a+I)}\|=1+|a|$. In \cite{duncan taylor} J. Duncan and P.J. Taylor proved that if $P,Q$ are non nil projections, then 
$$
\|P+Q\|=1+\|PQ\|.
$$
Then $\|P_{N(C_a-I)}P_{N(C_a+I)}\|=|a|$.
 \end{proof}

We can determine the spectrum of $\ddd_a$ (in particular, obtain another proof of  $\|\ddd_a\|=|a|^2$). We shall use Theorem \ref{espectros}. Therefore, it will be useful to compute the position of $N(C_a-I)$ and $N(C_a+I)$. First note that since these subspaces are complementary,
$$
N(C_a-I)\cap N(C_a+I)=\{0\}
$$
and 
$$
N(C_a-I)^\perp\cap N(C_a+I)^\perp=\langle N(C_a-I)+N(C_a+I)\rangle^\perp=\{0\}.
$$
In \cite{disco y composicion} it was shown (Prop. 6.1) that
\begin{equation}\label{61}
\dim N(C_a-I)\cap N(C_a+I)^\perp =1 \ \hbox{ but } \ N(C_a-I)^\perp\cap N(C_a+I)=\{0\}.
\end{equation}
Therefore
 in Halmos decomposition of $H^2$ in terms of $N(C_a-I)$ and $N(C_a+I)$ we have only two non trivial subspaces:
$$
N(C_a-I)\cap N(C_a+I)^\perp\oplus \h_0
$$
Denote $\ddd_a=0\oplus \ddd_a^0$, where $\ddd_a^0$ is the reduction of $\ddd_a$ to $\h_0$. Clearly $\sigma(\ddd_a)=\sigma(\ddd_a^0)\cup\{0\}$.

\begin{prop}
$\sigma(\ddd_a)=[0,|a|^2]$.
\end{prop}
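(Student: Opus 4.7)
My plan is to exploit the Halmos decomposition already set up just before the proposition, in which only the 1-dimensional piece $N(C_a-I)\cap N(C_a+I)^\perp$ and the generic part $\h_0$ survive. On the 1-dimensional piece, $P_{N(C_a-I)}=I$ and $P_{N(C_a+I)}=0$, so $\ddd_a$ vanishes there and contributes $\{0\}$ to the spectrum. Thus it suffices to determine $\sigma(\ddd_a^0)$, where $\ddd_a^0=P_0Q_0P_0$ in Halmos' representation.

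To compute $\sigma(\ddd_a^0)$ I would first identify $\sigma(P_0+Q_0)$ and then invoke Lemma \ref{espectros}(2). Theorem \ref{cuenta P+Q} gives $\sigma(P_{N(C_a-I)}+P_{N(C_a+I)})=[1-|a|,1+|a|]$. Decomposing $P+Q$ along the Halmos splitting yields $1\oplus(P_0+Q_0)$, so $\sigma(P_0+Q_0)\cup\{1\}=[1-|a|,1+|a|]$. The key observation is that $1$ lies in the interior of this interval and spectra are closed in $\mathbb{R}$, which rules out $\sigma(P_0+Q_0)=[1-|a|,1+|a|]\setminus\{1\}$ and forces $\sigma(P_0+Q_0)=[1-|a|,1+|a|]$ in full.

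Once this is in hand, Lemma \ref{espectros}(2) translates the membership $\lambda\in\sigma(\ddd_a^0)$ (for $\lambda\neq 0$) into the condition $1\pm\sqrt{\lambda}\in[1-|a|,1+|a|]$, which is equivalent to $\sqrt{\lambda}\leq|a|$, i.e.\ $\lambda\in(0,|a|^2]$. Combining with the $\{0\}$ contributed by the 1-dimensional piece yields $\sigma(\ddd_a)=[0,|a|^2]$. The only nontrivial step is the closedness argument that upgrades $\sigma(P_0+Q_0)\cup\{1\}$ to $\sigma(P_0+Q_0)=[1-|a|,1+|a|]$; after that, the rest follows mechanically from the tools already established.
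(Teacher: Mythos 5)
Your proposal is correct and follows essentially the same route as the paper: reduce to the generic part via the Halmos decomposition (the one-dimensional piece $N(C_a-I)\cap N(C_a+I)^\perp$ contributing only $\{0\}$), use Theorem \ref{cuenta P+Q} together with closedness of spectra to conclude $\sigma(P_0+Q_0)=[1-|a|,1+|a|]$, and then apply Lemma \ref{espectros}(2). The only difference is that you spell out the closedness argument at the point $1$, which the paper compresses into the remark that $1-|a|<1$.
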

\begin{proof}
Due to the observation above, we have to compute the spectrum of the reduction $\ddd_a^0$. Denote by $P_0, Q_0$ the reductions of $P_{N(C_a-I)}$ and $P_{N(C_a+I)}$ to $\h_0$. The spectrum of $P_0+Q_0$ is obtained from
$$
\sigma(P_{N(C_a-I)}+P_{N(C_a+I)})=\{1\}\cup \sigma(P_0+Q_0).
$$
Since $\sigma(P_{N(C_a-I)}+P_{N(C_a+I)})=[1-|a|, 1+|a|]$, and $1-|a|<1$, we have that also
$$
\sigma(P_0+Q_0)=[1-|a|, 1+|a|].
$$
Recall from Theorem \ref{espectros} that $\mu\in\sigma(P_0+Q_0)$ if and only if $\lambda=(\mu-1)^2\in\sigma(P_0Q_0P_0)$. Then 
the spectrum of $\ddd_a^0$ is the image of the function $f(\mu)=(\mu-1)^2$ in the interval $[1-|a|, 1+|a|]$, i.e., $[0,|a|^2]$.
\end{proof}

It is known (and elementary to verify) that
$$
N(C_a-I)\cap N(C_a+I)^\perp=N(P_{N(C_a-I)}-P_{N(C_a+I)}-I)
$$
and
$$
N(C_a-I)^\perp\cap N(C_a+I)=N(P_{N(C_a-I)}-P_{N(C_a+I)}+I).
$$
Then using again the intersections computed in (\ref{61}), 
we have that 
\begin{prop}
$\|P_{N(C_a-I)}-P_{N(C_a+I)}\|=1$.
\end{prop}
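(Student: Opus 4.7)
The plan is to exploit the one-dimensional intersection identified in (\ref{61}), together with the standard fact that $\|P-Q\|\le 1$ whenever $P,Q$ are orthogonal projections.

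First I would observe the easy upper bound: $P_{N(C_a-I)} - P_{N(C_a+I)}$ is the difference of two orthogonal projections, hence a selfadjoint contraction, so its norm is at most $1$. This is the trivial half.

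For the matching lower bound, I would invoke the identification recorded just before the statement, namely
$$
N(C_a - I)\cap N(C_a+I)^\perp = N\bigl(P_{N(C_a-I)} - P_{N(C_a+I)} - I\bigr),
$$
combined with the fact (from (\ref{61})) that the left-hand side is one-dimensional, and in particular nonzero. Picking a unit vector $f$ in this subspace, we obtain $\bigl(P_{N(C_a-I)} - P_{N(C_a+I)}\bigr)f = f$, which exhibits $1$ as an eigenvalue of the difference. Hence $\|P_{N(C_a-I)} - P_{N(C_a+I)}\| \ge 1$, and together with the previous paragraph this yields equality.

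There is no real obstacle here; the only subtlety is that while the symmetric intersection $N(C_a-I)^\perp\cap N(C_a+I)$ is trivial (so $-1$ is not an eigenvalue), the other intersection is not, and this asymmetry is precisely what forces the norm up to $1$. In the Halmos decomposition this is visible as the fact that $P-Q$ splits as $I \oplus (P_0 - Q_0)$ on $N(C_a-I)\cap N(C_a+I)^\perp \oplus \h_0$, so the $I$-summand already saturates the contraction bound regardless of the size of $\|P_0-Q_0\|$ (which by Lemma \ref{espectros}(1) and the previous proposition equals $|a|<1$, consistent with the claim).
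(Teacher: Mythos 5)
Your argument is correct and is essentially the paper's proof: the upper bound is the trivial fact that a difference of orthogonal projections is a selfadjoint contraction, and the lower bound comes from choosing a unit vector in the one-dimensional space $N(C_a-I)\cap N(C_a+I)^\perp$, which by the identification stated before the proposition is the eigenspace of $P_{N(C_a-I)}-P_{N(C_a+I)}$ for the eigenvalue $1$.

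One caveat about your closing parenthetical, which is not needed for the proof but is factually wrong: $\|P_0-Q_0\|$ is not $|a|$. Since $\sigma(\ddd_a^0)=[0,|a|^2]$ accumulates at $0$, the angle operator satisfies $\sigma(X)=[\arccos|a|,\pi/2]$, so $\|P_0-Q_0\|=\|\sin X\|=1$ (with $\pm1$ in the spectrum but not eigenvalues); the quantity that equals $|a|$ is $\|P_0Q_0\|=\|\ddd_a\|^{1/2}$, which you may have conflated with the norm of the difference. The main argument, as you say, does not depend on this, since the $I$-summand alone forces the norm to be $1$.
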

\begin{proof}
If $f\in N(C_a-I)\cap N(C_a+I)^\perp$ with $\|f\|=1$, then $(P_{N(C_a-I)}-P_{N(C_a+I)})f=f$.
\end{proof}
\section{The C$^*$-algebra generated by $C_a$.}
The $C^*$-algebra $\c^*(C_a)$ generated by $C_a$ coincides with the C$^*$-algebra generated by the projections $P_{N(C_a-I)}$ and $P_{N(C_a+I)}$. Therefore, by the results of G.K. Pedesersen \cite{pedersen} it is completely characterized by the spectrum of $\ddd_a$ (see for instance the excellent survey \cite{spitkovskyetal} on which we will base our exposition). 
Following the notation in \cite{spitkovskyetal}, given the subspaces $\l=N(C_a-I)$ and $\n=N(C_a+I)$, in order to characterize $\c_a^*$ we need the subspaces
$$
\m_{00}=\l\cap\n, \ \m_{01}=\l\cap\n^\perp, \ \m_{01}=\l^\perp\cap\n, \ \m_{11}=\l^\perp\cap\n^\perp,
$$
and $\h_0$ the orthogonal complement of the sum of the former four.
We already noticed that  $\m_{00}=\m_{11}=\m_{10}=\{0\}$ and $\dim \m_{01}=1$.
Again using  Halmos' theory \cite{halmos}, we have that $\h_0\simeq\j\times\j$ and there exists a positive operator $X\in\b(\j)$, $X\le \pi/2$, $N(X)=\{0\}$, such that the isomorphism that carries $\h_0$ onto $\j\times\j$ maps the projections $P_\l=P_{N(C_a-I)}$ and $P_\n=P_{N(C_a+I)}$ onto 
$$
P_{N(C_a-I)}\simeq\left(\begin{array}{cc} I & 0 \\ 0 & 0 \end{array}\right) \ \hbox{ and } \ P_{N(C_a+I)}\simeq\left(\begin{array}{cc} C^2 & CS \\ CS & S^2 \end{array}\right),
$$
where $C=\cos(X)$ and $S=\sin(X)$. Then $\c^*(C_a)$ can be described in terms of $H=\sin(X)^2$, or more precisely, in terms of the spectrum of $H$.
Since 
 $\sigma(\ddd_a)=[0,|a|^2]$, i.e., $\sigma(X)=[\arccos(|a|), \pi/2]$. It follows that $\sigma(H)=[1-|a|^2,1]$. Then according to Theorem 4.1 in \cite{spitkovskyetal}, we have that
\begin{teo}
Let $a\in\mathbb{D}$, $a\ne 0$. Then $\c_a^*$ is $*$-isomorphic to
$$
\{(\alpha, \left(\begin{array}{cc} f_{00}(H) & f_{01}(H)  \\ f_{10}(H) & f_{11}(H) \end{array}\right)): \alpha\in\mathbb{C}, f_{ij}\in C(1-|a|^2,1), f_{00}(1)=\alpha, f_{01}(1)=f_{10}(1)=0\}.
$$
\end{teo}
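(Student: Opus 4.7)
The plan is to treat this theorem as essentially a direct application of Theorem 4.1 of \cite{spitkovskyetal} (i.e.\ Pedersen's classification), with all the required data on the pair $(P_{N(C_a-I)},P_{N(C_a+I)})$ already assembled in the previous sections. What remains is a careful bookkeeping of which Halmos summands are nontrivial, computation of the spectrum of the Halmos parameter, and verification of the boundary conditions.

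First I would record the summand data. Among the four corner subspaces, three vanish and only $\m_{01}$ is nontrivial, with $\dim\m_{01}=1$, by the computation (\ref{61}) taken from \cite{disco y composicion}. On the one-dimensional $\m_{01}$, $P_\l=I$ and $P_\n=0$, so each element of $\c^*(C_a)$ acts there as a single scalar; this furnishes the $\alpha\in\mathbb{C}$ factor. On the generic part $\h_0$, Halmos' representation gives the $2\times 2$ matrix forms of $P_\l$ and $P_\n$ in terms of $C=\cos X$ and $S=\sin X$, and hence the $2\times 2$ matrix functions $\bigl(f_{ij}(H)\bigr)$ in the target algebra, with $H=\sin^2 X$.

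Second I would identify $\sigma(H)$. Using the Halmos form on $\h_0$, one has $P_0Q_0P_0\simeq C^2\oplus 0=(I-H)\oplus 0$, so $\sigma(\ddd_a^0)=\{1-t:t\in\sigma(H)\}$. Combining this with the previously established $\sigma(\ddd_a)=[0,|a|^2]$ and the fact that $\ddd_a=0\oplus\ddd_a^0$, we obtain $\sigma(H)=[1-|a|^2,1]$, which is the interval appearing in the statement. Equivalently, $\sigma(X)=[\arccos|a|,\pi/2]$.

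Third I would read off the boundary conditions. The spectrum of $H$ meets the endpoint $1$ (but not $0$, since $N(X)=\{0\}$ and the other endpoint $1-|a|^2>0$ is not hit by the degenerate situation). At $H=1$ one has $C=0$, $S=I$, so the matrix form of $P_\l$ collapses to $\mathrm{diag}(I,0)$ while $P_\n$ collapses to $\mathrm{diag}(0,I)$; this is precisely the corner behaviour in $\m_{01}$ (resp.\ in the absent $\m_{10}$) at the $(1,1)$ entry (resp.\ $(2,2)$ entry). The off-diagonal entries in the Halmos representation are multiples of $CS$, which vanishes at $H=1$. Pedersen's gluing therefore forces the continuity relations $f_{00}(1)=\alpha$, $f_{01}(1)=f_{10}(1)=0$, and imposes no condition on $f_{11}(1)$ because $\m_{10}=\{0\}$; likewise no condition at $H=1-|a|^2$ because this endpoint is interior to $\sigma(H)$ in the sense relevant to the theorem.

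I do not expect a genuine obstacle: everything reduces to quoting Theorem 4.1 of \cite{spitkovskyetal} with the correct dictionary. The only care required is to be sure that the one-dimensional $\m_{01}$ contributes the unconstrained scalar $\alpha$ while pinning $f_{00}(1)$, and that the vanishing of $\m_{10}, \m_{00}, \m_{11}$ removes all other potential constraints.
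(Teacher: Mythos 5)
Your proposal is correct and follows essentially the same route as the paper: identify via (\ref{61}) that only $\m_{01}$ (one-dimensional) survives among the corner subspaces, deduce $\sigma(H)=[1-|a|^2,1]$ from $\sigma(\ddd_a)=[0,|a|^2]$ through the Halmos form, and then quote Theorem 4.1 of \cite{spitkovskyetal}. Your extra paragraph spelling out how the boundary conditions $f_{00}(1)=\alpha$, $f_{01}(1)=f_{10}(1)=0$ (and the absence of a condition on $f_{11}(1)$ or at $1-|a|^2$) arise is a correct, slightly more explicit rendering of what the paper leaves to the cited theorem.
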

With this description, it is clear that
\begin{coro}
Let $a,b\in\mathbb{D}\setminus\{0\}$. Then $\c^*(C_a)\simeq\c^*(C_b)$.
\end{coro}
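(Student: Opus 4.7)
The plan is to read off the isomorphism class of $\c^*(C_a)$ from the explicit model given in the preceding theorem and observe that this model depends on $a$ only through the compact set $[1-|a|^2,1]$, together with the distinguished point $1\in[1-|a|^2,1]$ where the boundary conditions $f_{00}(1)=\alpha$ and $f_{01}(1)=f_{10}(1)=0$ are imposed. Since any two non-degenerate closed intervals in $\mathbb{R}$ are homeomorphic, and we can arrange the homeomorphism to fix the endpoint $1$, the two algebras will be $*$-isomorphic via pull-back of functions.

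Concretely, first I would choose the homeomorphism
$$
\phi\colon [1-|a|^2,1]\longrightarrow [1-|b|^2,1], \qquad \phi(t)=1-\frac{|b|^2}{|a|^2}(1-t),
$$
which is an affine increasing bijection satisfying $\phi(1)=1$ and $\phi(1-|a|^2)=1-|b|^2$. Then I would define
$$
\Psi\colon\c^*(C_b)\longrightarrow \c^*(C_a), \qquad \Psi\!\left(\alpha,\,(f_{ij})_{i,j}\right)=\bigl(\alpha,\,(f_{ij}\circ\phi)_{i,j}\bigr).
$$
The map $g\mapsto g\circ\phi$ is a $*$-isomorphism of $C([1-|b|^2,1])$ onto $C([1-|a|^2,1])$, so $\Psi$ is clearly linear, multiplicative, $*$-preserving, and bijective at the level of the underlying matrix algebras. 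The only thing to verify is that $\Psi$ respects the boundary conditions at $1$ that define the two subalgebras: since $\phi(1)=1$, the identities $(f_{ij}\circ\phi)(1)=f_{ij}(1)$ show that $f_{00}(1)=\alpha$, $f_{01}(1)=0$, $f_{10}(1)=0$ transfer from $b$ to $a$ unchanged. Hence $\Psi$ is a well-defined $*$-isomorphism.

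There is essentially no real obstacle here; the content of the corollary is already contained in the matrix-model characterisation. The one mild subtlety is making sure the chosen homeomorphism $\phi$ fixes the distinguished endpoint $1$, because the defining constraints of the algebras are evaluated at that point; any affine or monotone homeomorphism mapping $1\mapsto 1$ will do.
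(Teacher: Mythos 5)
Your proof is correct and is essentially the argument the paper leaves implicit (the paper simply states the corollary is ``clear'' from the matrix-model description): the isomorphism class depends on $a$ only through the interval $[1-|a|^2,1]$ and the distinguished point $1$, and pulling back along an endpoint-fixing homeomorphism of intervals gives the $*$-isomorphism. Your choice of the affine map $\phi$ with $\phi(1)=1$ is exactly the right detail to make this explicit.
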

\section{Relationship between $C_a$ and $\Gamma_a$}

Since $\varphi_a$ is also a homeomorphism in $\mathbb{T}$, it induces a  composition operator $\Gamma_a$  in $L^2(\mathbb{T})$. This operator is also  reflection, and is easier to handle. For instance, its adjoint is easier to compute.

Clearly $\Gamma_a\big|_{H^2}=C_a$. Moreover, $P_{N(\Gamma_a-I)}$ leaves $H^2$ invariant. Indeed, if $f\in H^2$ and $f=f_++f_-$ with $f_+\in N(\Gamma_a-I)$ and $f_-\in N(\Gamma_a+I)$, then $\Gamma_af=f_+-f_-\in H^2$. Then $f+\Gamma_af=2f_+\in H^2$, i.e., $f_+,f_-\in H^2$. Then 
\begin{equation}\label{restricciones}
P_{N(\Gamma_a-I)}\big|_{H^2}=P_{N(C_a-I)} \ \hbox{ and } P_{N(\Gamma_a+I)}\big|_{H^2}=P_{N(C_a+I)}.
\end{equation}

Denote by $P_+$ the orthogonal projection of $L^2(\mathbb{T})$ onto $H^2$, and by $H_-:=L^2(\mathbb{T})\ominus H^2$. The fact that $\Gamma_a\big|_{H^2}=C_a$, means $P_+\Gamma_a P_+=\Gamma_a P_+=C_a$. On the other hand, by a change of variables argument it is easy to see that
$$
\Gamma_a^*=(1-|a|^2)M_{\frac{1}{|1-\bar{a}z|^2}}\Gamma_a=(1-|a|^2)M_{\frac{1}{1-\bar{a}z}\frac{z}{z-a}}\Gamma_a.
$$
If $f,g\in H^2$,
$$
\langle \Gamma_a^*f,g\rangle=\langle f,\Gamma_ag\rangle=\langle f, C_ag\rangle=\langle C_a^*f,g\rangle,
$$
i.e., $P_+\Gamma_a^*P_+=C^*_a$. Let us see how $\Gamma_a^*$ acts on $H^2$ (i.e., let us compute $P_+^\perp\Gamma_a^*P_+$).
\begin{lem}
Let $h\in H^2$, $a\in\mathbb{D}$, $a\ne 0$, and denote by $h_0=h-h(0)$. Then $\frac{h_0(\varphi_a(z))}{z-a}\in H^2$ and
$$
\Gamma_a^*h=(1-|a|^2)\frac{z}{1-\bar{a}z} \frac{h_0(\varphi_a(z))}{z-a}+h(0)\Gamma_a^*(1).
$$
Moreover, 
$$
\Gamma_a^*(1)=\frac{1}{1-\bar{a}z}+\frac{a}{z-a},
$$
where the first summand lies in $H^2$ and the second in $H_-$.
\end{lem}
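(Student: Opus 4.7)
The strategy is to apply directly the explicit formula $\Gamma_a^* = (1-|a|^2) M_{\frac{z}{(1-\bar{a}z)(z-a)}} \Gamma_a$ stated above, and to split $h$ as $h_0 + h(0)\cdot 1$, handling the two summands separately. Linearity of $\Gamma_a^*$ produces immediately the decomposition claimed in the lemma, provided one identifies the two special pieces.

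First I would compute $\Gamma_a^*(1)$. Since $\Gamma_a(1)=1$, the formula yields
$$
\Gamma_a^*(1)(z) = \frac{(1-|a|^2)z}{(1-\bar{a}z)(z-a)}.
$$
A partial-fraction decomposition in $z$ gives
$$
\frac{(1-|a|^2)z}{(1-\bar{a}z)(z-a)} = \frac{1}{1-\bar{a}z} + \frac{a}{z-a},
$$
as one checks by matching residues at $z=a$ and $z=1/\bar{a}$. The first summand is the (unnormalized) Szeg\"o kernel at $a$, hence in $H^2$. For the second, on $\mathbb{T}$ one has
$$
\frac{a}{z-a} = \frac{a}{z(1-a/z)} = \sum_{n\ge 1} a^n z^{-n},
$$
which is a pure negative-frequency series and therefore lies in $H_-$.

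For the $h_0$-piece, the formula for $\Gamma_a^*$ yields directly
$$
\Gamma_a^* h_0(z) = (1-|a|^2) \frac{z}{1-\bar{a}z} \cdot \frac{h_0(\varphi_a(z))}{z-a},
$$
so the proof reduces to showing that $\frac{h_0(\varphi_a(z))}{z-a} \in H^2$. Here the key point is that $h_0(0)=0$: writing $h_0(w)=w\,\tilde h(w)$ with $\tilde h = S^* h_0 \in H^2$, and using the identity $\varphi_a(z)(1-\bar a z) = -(z-a)$, the pole at $z=a$ cancels:
$$
\frac{h_0(\varphi_a(z))}{z-a} = \frac{\varphi_a(z)\,\tilde h(\varphi_a(z))}{z-a} = -\frac{1}{1-\bar a z}\,\tilde h(\varphi_a(z)).
$$
The factor $\frac{1}{1-\bar a z}$ belongs to $H^\infty$, while $\tilde h\circ\varphi_a = C_a\tilde h \in H^2$, so the product lies in $H^2$, as required.

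The proof is then assembled: by linearity, $\Gamma_a^* h = \Gamma_a^* h_0 + h(0)\Gamma_a^*(1)$, which combined with the two computations above gives the stated identity together with the $H^2 \oplus H_-$ splitting of $\Gamma_a^*(1)$. The only genuinely non-routine step is the $H^2$-regularity of $h_0(\varphi_a(z))/(z-a)$, which is handled cleanly by the cancellation $\varphi_a(z)(1-\bar a z) = -(z-a)$; the rest is a direct computation from the formula for $\Gamma_a^*$.
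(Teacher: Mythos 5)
Your proof is correct, and its overall skeleton is the paper's: split $h=h_0+h(0)\cdot 1$ and apply the formula $\Gamma_a^*=(1-|a|^2)M_{\frac{z}{(1-\bar a z)(z-a)}}\Gamma_a$ to the $h_0$-piece. For that piece you are in fact more explicit than the paper, which only remarks that $h_0(\varphi_a(a))=h_0(0)=0$ kills the pole; your factorization $h_0(w)=w\,\tilde h(w)$ with $\tilde h=S^*h_0$ and the identity $\varphi_a(z)(1-\bar a z)=-(z-a)$ turn this into the clean formula $\frac{h_0(\varphi_a(z))}{z-a}=-\frac{1}{1-\bar a z}\,C_a\tilde h\in H^2$, which is a nice improvement in rigor. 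Where you genuinely diverge is the computation of $\Gamma_a^*(1)$: you apply the same multiplication formula to the constant function and then do a partial-fraction decomposition of $\frac{(1-|a|^2)z}{(1-\bar a z)(z-a)}$, whereas the paper computes the Fourier coefficients of $\Gamma_a^*(1)$ directly from the adjoint pairing, getting $\bar a^{\,n}$ for the coefficient of $z^n$, $n\ge 0$, and $a^{|m|}$ for the coefficient of $z^m$, $m<0$, and then sums the two geometric series to obtain $\frac{1}{1-\bar a z}$ and $\frac{a}{z-a}$. Your route is shorter and purely algebraic, but it leans entirely on the formula for $\Gamma_a^*$ stated before the lemma; the paper's Fourier computation for this piece is independent of that formula, so it doubles as a consistency check on it. Both arguments are valid, and your $H^-$ identification of $\frac{a}{z-a}$ via the expansion $\sum_{n\ge 1}a^n z^{-n}$ matches the paper's.
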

\begin{proof}
If $h=h_0+h(0)$, then 
$$
\Gamma_a^*h=\Gamma_a^* h_0+h(0)\Gamma_a^*(1)=(1-|a|^2)M_{\frac{1}{1-\bar{a}z}\frac{z}{z-a}}h_0(\varphi_a(z))+h(0)\Gamma_a^*(1).
$$
Note that $h_0(\varphi_a(a))=h_0(0)=0$, and thus $\frac{h_0(\varphi_a(z))}{z-a}\in H^2$.
On the other hand, if $n\ge 0$,
$$
\langle z^n,\Gamma_a^*(1)\rangle=\langle \Gamma_a(z^n),1\rangle=\langle  \left(\frac{a-z}{1-\bar{a}z}\right)^n,1\rangle=\left(\frac{a-z}{1-\bar{a}z}\right)^n\big|_{z=0}=a^n,
$$
i.e. $\langle\Gamma_a^*(1),z^n\rangle =\bar{a}^n$,  for $n\ge 0$, and thus (for $z\in\mathbb{T}$)
$$
P_+(\Gamma_a^*(1))=\sum_{n=0}^\infty \bar{a}^nz^n=\frac{1}{1-\bar{a}z}.
$$
For $m<0$, 
$$
\langle \Gamma_a^*(1). z^m\rangle=\langle 1,\left(\frac{a-z}{1-\bar{a}z}\right)^m\rangle=\frac{1}{2\pi}\int_{\mathbb{T}} \left(\frac{\bar{a}-\bar{z}}{1-a\bar{z}}\right)^m dz=\frac{1}{2\pi}\int_{\mathbb{T}}\left(\frac{\bar{a}-1/z}{1-a/z}\right)^m dz
$$
$$
=\frac{1}{2\pi}\int_{\mathbb{T}}\left(\frac{\bar{a}-z}{1-\bar{a}z}\right)^{-m} dz=\langle \varphi_a(z)^{-m},1\rangle=a^{-m}.
$$
Then 
$$
P_+^\perp(\Gamma_a^*(1))=\sum_{m<0} a^{-m}z^{m}=\sum_{m<0}\left(\frac{a}{z}\right)^{-m}=\frac{a/z}{a-a/z}=\frac{a}{z-a}.
$$
\end{proof}
In other words, $P_+^\perp \Gamma_a^*P_+$ is the rank one operator
\begin{equation}
P_+^\perp \Gamma_a^*P_+ f=\frac{a}{z-a} \langle f  , 1\rangle. 
\end{equation}
Therefore, to complete the $2\times 2$ matrix of $\Gamma_a$ in terms of the decomposition $L^2(\mathbb{T})=H^2\oplus H_-$ it remains to compute $P_+^\perp\Gamma_a P_+^\perp$. Denote by $V$ the symmetry of $L^2(\mathbb{T})$ given by
$$
Vf(z)=f(\bar{z}).
$$
Clearly $V^*=V^{-1}=V$, and $V$ maps $H_-$ onto $H^2\ominus\langle 1\rangle$. Also it is clear that if $g\in H_-$, then 
$$
\Gamma_a(g)=VC_{\bar{a}}Vg.
$$
Indeed, if $g=\sum_{k=1}^\infty a_k z^{-k}$, then 
$$
VC_{\bar{a}}Vg=VC_{\bar{a}}(\sum_{k=1}^\infty a_kz^k)=V(\sum_{k=
1}^\infty a_k\left(\frac{\bar{a}-z}{1-az}\right)^k)=\sum_{k=1}^\infty a_k\left(\frac{\bar{a}-1/z}{1-a/z}\right)^k
$$
$$
=\sum_{k=1}^\infty a_k\left(\frac{a-z}{1-\bar{a}z}\right)^{-k}=\Gamma_ag.
$$
In particular, note that $\Gamma_ah_-\subset H_-\oplus \langle 1\rangle$.
For $g\in H_-$, the component of $\Gamma_ah$ in $\langle 1 \rangle$ is (accordingly)
$$
\langle \Gamma_ag,1\rangle=\langle VC_{\bar{a}}Vh,1\rangle=\langle C_{\bar{a}}Vg. V1\rangle=\langle C_{\bar{a}}Vg, 1\rangle,
$$
recall that for $f\in H^2$, $\langle C_bf,1\rangle=f(b)$, the quantity above equals
$$
Vg(\bar{a})=\langle Vg, k_{\bar{a}}\rangle=\langle g, Vk_{\bar{a}}\rangle= \langle g, \frac{z}{z-a}\rangle.
$$
Then, we have
\begin{teo}
The $2\times 2$ matrix of $\Gamma_a$ in terms of the decomposition $L^2(\mathbb{T})=H^2\oplus H_-$ is 
$$
\left( \begin{array}{ccc} C_a &  &\langle \ \ , \frac{a}{z-a} \rangle 1 \\ & & \\ 0 &  & VC_{\bar{a}}V-\langle \ \ , \frac{a}{z-a} \rangle 1 \end{array}\right).
$$
\end{teo}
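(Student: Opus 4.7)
The plan is to read off each of the four blocks $P_+\Gamma_a P_+$, $P_+\Gamma_a P_+^\perp$, $P_+^\perp\Gamma_a P_+$, $P_+^\perp\Gamma_a P_+^\perp$ separately, and to profit from the observations already made just before the statement of the theorem.

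First I would dispose of the left column. The identity $\Gamma_a|_{H^2}=C_a$ has already been recorded, so in particular $\Gamma_a$ maps $H^2$ into $H^2$. Therefore $P_+^\perp\Gamma_a P_+=0$ and $P_+\Gamma_a P_+=C_a$, which fixes the two left entries of the matrix. This is the easy half.

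Next I would compute the right column, i.e., the action of $\Gamma_a$ on $g\in H_-$. The key input is the identity $\Gamma_a g=VC_{\bar a}Vg$ derived in the paragraph preceding the theorem. Since $V$ sends $z^n$ to $z^{-n}$, one has $V(H^2)=H_-\oplus\langle 1\rangle$, and consequently $\Gamma_a g\in H_-\oplus\langle 1\rangle$. Hence $P_+\Gamma_a g$ must be a constant multiple of $1$, namely $P_+\Gamma_a g=\langle \Gamma_a g,1\rangle\, 1$. To identify that constant, I would move to the adjoint: $\langle\Gamma_a g,1\rangle=\langle g,\Gamma_a^*(1)\rangle$. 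By the previous lemma, $\Gamma_a^*(1)=\frac{1}{1-\bar a z}+\frac{a}{z-a}$ with the first summand in $H^2$; since $g\in H_-=(H^2)^\perp$, the first summand contributes nothing and we get $\langle g,\Gamma_a^*(1)\rangle=\langle g,\frac{a}{z-a}\rangle$. (Equivalently, one can compute $\langle VC_{\bar a}Vg,1\rangle=\langle g,Vk_{\bar a}\rangle=\langle g,\frac{z}{z-a}\rangle=\langle g,\frac{a}{z-a}\rangle$, using $\langle g,1\rangle=0$.)

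Assembling the pieces, $P_+\Gamma_a g=\langle g,\frac{a}{z-a}\rangle\, 1$ and $P_+^\perp\Gamma_a g=\Gamma_a g-P_+\Gamma_a g=VC_{\bar a}Vg-\langle g,\frac{a}{z-a}\rangle\, 1$, which are precisely the two entries of the right column. The main obstacle is purely bookkeeping: tracking correctly which component of $\Gamma_a^*(1)$ pairs nontrivially with $g\in H_-$ and reconciling the two equivalent forms $\frac{z}{z-a}$ and $\frac{a}{z-a}$ of the kernel; any sign or offset error here would propagate through both entries of the right column. Once this identification is made carefully, the four blocks combine to give the stated matrix representation.
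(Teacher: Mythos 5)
Your proposal is correct and follows essentially the same route as the paper: the left column from $\Gamma_a|_{H^2}=C_a$, the upper-right block by dualizing against $\Gamma_a^*(1)$ (equivalently, taking the adjoint of the rank-one operator $P_+^\perp\Gamma_a^*P_+$ computed in the preceding lemma), and the lower-right block from $\Gamma_a g=VC_{\bar a}Vg$ minus its constant component, with the same observation that $\langle g,\tfrac{z}{z-a}\rangle=\langle g,\tfrac{a}{z-a}\rangle$ for $g\in H_-$.
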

\begin{proof}
The adjoint of $\langle \ \ , 1\rangle \frac{a}{a-z}$ is $\langle  \ \ , \frac{a}{a-z}\rangle 1$.
\end{proof}

\section{Two symmetries}

We shall consider two symmetries (i.e., selfadjoint unitaries) which are closely related to $C_a$. The first one comes from the polar decomposition of $C_a$:
$$
C_a=\rho_a|C_a|=\rho_a(C_a^*C_a)^{1/2}, \ \hbox{ i.e., } \ \rho_a=C_a(C_a^*C_a)^{-1/2}.
$$
In \cite{cpr} it was shown that the unitary part in the polar decomposition of a reflection  is in fact a symmetry. Thus,  $\rho_a^*=\rho_a^{-1}$. Note then that $C_a^*=|C_a|\rho_a$ and thus
\begin{equation}
\rho_a C_a \rho_a=C_a^*.
\end{equation}
The second symmetry can be found  in the book \cite{cowenmccluer}  of C. Cowen and B. McCluer (Exercise 2.1.9): $W_a:H^2\to H^2$, 
\begin{equation}\label{Wa}
W_af=\sqrt{1-|a|^2} M_{k_a} C_af, 
\end{equation}
where $k_a(z)=\frac{1}{1-\bar{a}z}$ is the Szego kernel, i.e.,
$$
W_af (z)=\frac{\sqrt{1-|a|^2}}{1-\bar{a}z} f(\varphi_a(z)).
$$
Cowen and McCluer mention that $W_a$ is isometric for all $p$ norms, it is easy to see that  $W_a^2=I$.
Denote by $f_a$ the $H^\infty$ function
\begin{equation}\label{fa}
f_a:=\frac{1+\bar{\omega}_az}{1-\bar{\omega}_az}.
\end{equation}
Note that $\bar{f}_a=\frac{z+\omega_a}{z-\omega_a}$ 
Then
\begin{prop}
Let $a\in\mathbb{D}$, and $\omega_a$ the fixed point of $\varphi_a$ inside $\mathbb{D}$. 
Then
$$
W_{\omega_a}C_aW_{\omega_a}=T_{f_a}C_0=C_0T_{1/f_a}.
$$
\end{prop}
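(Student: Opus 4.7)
The plan is to unfold the composition $W_{\omega_a} C_a W_{\omega_a}$ pointwise using the definition (\ref{Wa}) of $W_b$, with $b=\omega_a$, and to exploit the fixed-point identity (\ref{phis}). All algebra happens on the level of functions.

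First I would set $b=\omega_a$ and compute $C_a W_b f$ and then $W_b C_a W_b f$ in two successive steps. By definition,
$$
W_b f(z)=\frac{\sqrt{1-|b|^2}}{1-\bar b z}\, f(\varphi_b(z)),
$$
so
$$
(C_a W_b f)(z)=(W_b f)(\varphi_a(z))=\frac{\sqrt{1-|b|^2}}{1-\bar b\,\varphi_a(z)}\, f\!\bigl(\varphi_b(\varphi_a(z))\bigr)
=\frac{\sqrt{1-|b|^2}}{1-\bar b\,\varphi_a(z)}\, f(-\varphi_b(z)),
$$
where the last equality uses (\ref{phis}). Applying $W_b$ once more and using $\varphi_b\circ\varphi_b=\mathrm{id}$ turns the inner $f(-\varphi_b(\varphi_b(z)))$ into $f(-z)$, so the only remaining task is to simplify the scalar prefactor.

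For that prefactor I need $1-\bar b\,\varphi_a(\varphi_b(z))$. Substituting $z\mapsto\varphi_b(z)$ in (\ref{phis}) and using $\varphi_b\circ\varphi_b=\mathrm{id}$ yields $\varphi_a(\varphi_b(z))=\varphi_b(-z)=\frac{b+z}{1+\bar b z}$, hence
$$
1-\bar b\,\varphi_a(\varphi_b(z))=\frac{1-|b|^2}{1+\bar b z}.
$$
Plugging this into the expression above, the factors of $1-|b|^2$ cancel and one obtains
$$
W_b C_a W_b f(z)=\frac{1+\bar b z}{1-\bar b z}\, f(-z)=f_a(z)\,(C_0 f)(z),
$$
which is exactly $T_{f_a}C_0 f$ since $f_a\in H^\infty$ (as $|b|=|\omega_a|<1$).

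For the second equality $T_{f_a}C_0=C_0 T_{1/f_a}$, I would just observe that $1/f_a\in H^\infty$ as well (the denominator $1+\bar\omega_a z$ does not vanish on $\DD$) and that $f_a(-z)=\frac{1-\bar\omega_a z}{1+\bar\omega_a z}=1/f_a(z)$. Consequently, for $g\in H^2$,
$$
C_0 T_{1/f_a}g(z)=\frac{1}{f_a(-z)}g(-z)=f_a(z)\,g(-z)=T_{f_a}C_0 g(z),
$$
finishing the proof. The only potentially delicate point is deducing $\varphi_a\circ\varphi_b=\varphi_b(-\,\cdot\,)$ from (\ref{phis}); everything else is bookkeeping.
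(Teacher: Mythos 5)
Your proposal is correct and follows essentially the same route as the paper: unfold $W_{\omega_a}C_aW_{\omega_a}$ pointwise, use (\ref{phis}) to derive $\varphi_a\circ\varphi_{\omega_a}(z)=\varphi_{\omega_a}(-z)$, and simplify the resulting prefactor to $f_a$. The only (harmless) difference is in the identity $T_{f_a}C_0=C_0T_{1/f_a}$, which you verify directly from $f_a(-z)=1/f_a(z)$, whereas the paper deduces it from $(W_{\omega_a}C_aW_{\omega_a})^2=I$ and the invertibility of $f_a$ in $H^\infty$.
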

\begin{proof}
By direct computation:
$$
W_{\omega_a}C_aW_{\omega_a}f(z)=\sqrt{1-|\omega_a|^2}W_{\omega_a}C_a\left(f(\varphi_{\omega_a}(z)) k_{\omega_a}(z)\right)
$$
$$
=\sqrt{1-|\omega_a|^2}W_{\omega_a}\left(f(\varphi_{\omega_a}(\varphi_a(z))) k_{\omega_a}(\varphi_a(z))\right).
$$
Recall from (\ref{phis}) that $\varphi_{\omega_a}\circ\varphi_a=-\varphi_{\omega_a}$. Thus, the above expression equals
$$
(1-|\omega_a|^2)f(-\varphi_{\omega_a}(\varphi_{\omega_a}(z))) k_{\omega_a}(z) k_{\omega_a}(\varphi_a(\varphi_{\omega_a}(z)))=(1-|\omega_a|^2)f(-z) k_{\omega_a}(z) k_{\omega_a}(\varphi_a(\varphi_{\omega_a}(z)))
$$
Note that $\varphi_{\omega_a}\circ\varphi_a=-\varphi_{\omega_a}$ and $\varphi_{\omega_a}\circ\varphi_{\omega_a}(z)=z$ imply that $\varphi_{\omega_a}\circ\varphi_a\circ\varphi_{\omega_a}(z)=-z$, and thus
$$
\varphi_a\circ\varphi_{\omega_a}(z)=\varphi_{\omega_a}(-z).
$$
Then
$$
k_{\omega_a}(\varphi_a(\varphi_{\omega_a}(z)))=\frac{1}{1-\bar{\omega}_a\left(\frac{\omega_a+z}{1+\bar{\omega}_az}\right)}=\frac{1+\bar{\omega}_az}{1-|\omega_a|^2}.
$$
Therefore
$$
W_{\omega_a}C_aW_{\omega_a}f(z)=f(-z)k_{\omega_a}(z) (1+\bar{\omega}_az)=f_a(z)f(-z)=T_{f_a}C_0f(z).
$$
The facts that $(W_{\omega_a}C_aW_{\omega_a})^2=I$, and that $f_a$ is invertible in $H^\infty$ imply that 
$$
T_{f_a}C_0=(T_{f_a}C_0)^{-1}=C_0T_{f_a}^{-1}=C_0T_{1/f_a}.
$$
\end{proof}

\begin{rem}
Since $f_a, 1/f_a\in H^\infty$, we could write $M_{f_a}, M_{1/f_a}$ instead of $T_{f_a}, T_{1/f_a}$. Also note that 
$$
W_{\omega_a}C^*_aW_{\omega_a}=\left(W_{\omega_a}C_aW_{\omega_a}\right)^*=\left(T_{f_a}C_0\right)^*=C_0T_{\bar{f}_a}.
$$
And similarly, 
$$
W_{\omega_a}C^*_aW_{\omega_a}=(C_0T_{1/f_a})^*=T_{1/\bar{f}_a}C_0.
$$
\end{rem}
In Lemma 4.1 of \cite{disco y composicion} it was shown that the (unitary) product $W_a\rho_a$ commutes with $C_a^*C_a$ (and thus also with its inverse $C_aC_a^*$). As a consequence we obtain the following:
\begin{prop}
With the current notations we have that
$$
W_a\rho_a C_a=C_a(W_a\rho_a)^*, \ W_a\rho_a C_a^*=C_a^*(W_a\rho_a)^*,
$$
and 
$$
C_a^*C_aW_a =W_a(C_a^*C_a)^{-1}.
$$
\end{prop}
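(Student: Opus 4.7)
The plan is to derive everything from two ingredients: the polar-decomposition identity $C_a = \rho_a|C_a|$ together with $C_a^2 = I$, and the hypothesis (cited from \cite{disco y composicion}) that $W_a\rho_a$ commutes with $C_a^*C_a$. Write $U := W_a\rho_a$; since $W_a$ and $\rho_a$ are symmetries, $U$ is unitary with $U^* = \rho_a W_a$.

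First I would extract the third identity. Squaring $C_a = \rho_a|C_a|$ and using $C_a^2 = I$ gives $\rho_a|C_a|\rho_a = |C_a|^{-1}$, hence
\[
\rho_a(C_a^*C_a)\rho_a = (\rho_a|C_a|\rho_a)^2 = (C_a^*C_a)^{-1}.
\]
The hypothesis $U\,C_a^*C_a = C_a^*C_a\,U$ reads $W_a\rho_a\,C_a^*C_a = C_a^*C_a\,W_a\rho_a$; multiplying on the right by $\rho_a$ and using the displayed identity yields
\[
W_a(C_a^*C_a)^{-1} = C_a^*C_a\,W_a,
\]
which is the third statement.

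Next I would promote this to the square root by functional calculus: since $W_a = W_a^{-1}$, the relation $W_a(C_a^*C_a)W_a = (C_a^*C_a)^{-1}$ is equivalent to $W_a$ conjugating the positive operator $C_a^*C_a$ to its inverse, so the same holds for any continuous function of $C_a^*C_a$ composed with the inversion $t \mapsto 1/t$; applying this to $t^{1/2}$ gives
\[
W_a|C_a|W_a = |C_a|^{-1}, \qquad\text{equivalently}\qquad W_a|C_a| = |C_a|^{-1}W_a.
\]

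Finally, the first two identities fall out by direct substitution. For the first,
\[
U C_a = W_a\rho_a \rho_a|C_a| = W_a|C_a|, \qquad
C_a U^* = \rho_a|C_a|\rho_a W_a = |C_a|^{-1}W_a,
\]
and these agree by the square-root identity. For the second, using $C_a^* = |C_a|\rho_a$,
\[
U C_a^* = W_a\rho_a|C_a|\rho_a = W_a|C_a|^{-1}, \qquad
C_a^* U^* = |C_a|\rho_a\rho_a W_a = |C_a|W_a,
\]
which again agree by $W_a|C_a|^{-1} = |C_a|W_a$. The only mild subtlety is step four, the passage to the square root; since $C_a^*C_a$ is positive and invertible with spectrum bounded away from $0$ (see Remark \ref{normas}), functional calculus applies without issue, so no real obstacle arises.
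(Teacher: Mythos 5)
Your proof is correct and rests on the same ingredients as the paper's: the commutation of $W_a\rho_a$ with $C_a^*C_a$ cited from Lemma 4.1 of \cite{disco y composicion}, the polar decomposition $C_a=\rho_a|C_a|$ together with $C_a^2=I$, and continuous functional calculus to pass to the square root $|C_a|$. The only difference is organizational: you derive the third identity first and work with the conjugation relation $W_a|C_a|W_a=|C_a|^{-1}$, whereas the paper uses the commutation of $W_a\rho_a$ with $|C_a|$ and the intertwining $\rho_aC_a\rho_a=C_a^*$ --- equivalent facts given $\rho_a|C_a|\rho_a=|C_a|^{-1}$.
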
 
\begin{proof}
Since $W_a\rho_a$ commutes with $C_a^*C_a$, it also commutes with its square root $|C_a|$ (and therefore also $\rho_aW_a=(W_a\rho_a)^*$ commutes with $|C_a|$). Then
$$
W_a\rho_aC_a=W_a\rho_a\rho_a|C_a|=W_a|C_a|=\rho_a(\rho_a W_a)|C_a|=\rho_a|C_a|\rho_aW_a=C_a\rho_aW_a.
$$
Thus, aking adjoints,
$$
C_a^*\rho_aW_a=(W_a\rho_aC_a)^*=(C_a\rho_aW_a)^*=W_a\rho_aC_a^*.
$$
Finally, using that $\rho_a$ inertwines $C_a$ with $C_a^*$,
$$
C_a^*C_aW_a\rho_a=W_a\rho_a C_a^*C_a=W_aC_a\rho_aC_a=W_aC_aC_a^*\rho_a,
$$
and thus $C_a^*C_aW_a=W_aC_aC_a^*=W_a(C_a^*C_a)^{-1}$.
\end{proof}
\section{The relationship with $T_{\varphi_{\omega_a}}$}

Since $\varphi_{\omega_a}$ is an inner function, the Toeplitz operator $T_{\varphi_{\omega_a}}$ is an isometry. It is easy to see that it has co-rank $1$. Below we compute the orthogonal projection $T_{\varphi_{\omega_a}}T^*_{\varphi_{\omega_a}}=T_{\varphi_{\omega_a}}T_{\overline{\varphi}_{\omega_a}}$.
We shall use  the following computation:
\begin{lem}\label{cuentita}
Let $f\in H^2$ and $b\in\mathbb{D}$. 
Then 
$$
P_+(f/\varphi_b)=\frac{f-f(b)}{\varphi_b} +f(b)\bar{b}.
$$
\end{lem}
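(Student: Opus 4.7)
The plan is to split off the analytic part of $f/\varphi_b$ and reduce the computation to finding $P_+(1/\varphi_b)$. I would begin with the decomposition
$$
\frac{f}{\varphi_b}=\frac{f-f(b)}{\varphi_b}+\frac{f(b)}{\varphi_b}.
$$

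For the first summand, since $\varphi_b$ is an inner function with a simple zero at $b$ and $f-f(b)$ also vanishes at $b$, the quotient lies in $H^2$: writing it as $\frac{f(z)-f(b)}{b-z}\,(1-\bar{b}z)$, the divided difference is an $H^2$ function and multiplication by the $H^\infty$ polynomial $1-\bar{b}z$ stays in $H^2$. Hence $P_+$ fixes $(f-f(b))/\varphi_b$, yielding the first term of the formula.

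The remaining task is to compute $P_+(1/\varphi_b)$. The key observation is that on $\mathbb{T}$ the function $\varphi_b$ is unimodular, so $1/\varphi_b=\overline{\varphi_b}$ there. I would then invoke the general fact that for any $u\in H^2$ with Taylor expansion $u(z)=\sum_{n\ge 0}a_n z^n$, the boundary function satisfies $\overline{u(z)}=\sum_{n\ge 0}\overline{a_n}\,z^{-n}$ on $\mathbb{T}$, so $P_+(\bar u)=\overline{u(0)}$. Applied to $u=\varphi_b$, where $\varphi_b(0)=b$, this gives $P_+(1/\varphi_b)=\bar b$, and hence $P_+(f(b)/\varphi_b)=f(b)\bar b$. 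Adding the two pieces produces the claimed identity.

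I do not anticipate any real obstacle; the only subtle point is recognizing that unimodularity of $\varphi_b$ on $\mathbb{T}$ converts the singular-looking $1/\varphi_b$ into a conjugate-analytic function, after which the Fourier-coefficient bookkeeping is immediate.
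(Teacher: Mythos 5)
Your argument is correct and follows essentially the same route as the paper: the same splitting $f/\varphi_b=\frac{f-f(b)}{\varphi_b}+\frac{f(b)}{\varphi_b}$ with the first summand recognized as an $H^2$ function, reducing everything to $P_+(1/\varphi_b)=\bar b$. The only (harmless) difference is in that last step: you use that $1/\varphi_b=\overline{\varphi_b}$ on $\mathbb{T}$ and that $P_+$ of a conjugate-analytic function is its value at $0$ conjugated, whereas the paper gets the same constant $\bar b$ by the partial-fraction identity $\frac{1-\bar b z}{b-z}=\frac{1-|b|^2}{b-z}+\bar b$ together with $\frac{1}{b-z}\in H_-$.
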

\begin{proof}
Note that 
$f/\varphi_b=\frac{f-f(b)}{\varphi_b} +\frac{f(b)}{\varphi_b}$, where the first summand belongs to $H^2$. Also note that $\frac{1}{b-z}\in H_-$, and thus 
$$
P_+(f(b)/\varphi_b)=f(b) P_+(\frac{1-\bar{b}z}{b-z})=f(b) P_+(\frac{1-|b|^2}{b-z}+\bar{b})=f(b) \bar{b}.
$$
\end{proof}
Therefore, $I-T_{\varphi_{\omega_a}}T_{\overline{\varphi}_{\omega_a}}$ 
 is the orthogonal projection onto the line generated by $k_{\omega_a}$:
using Lemma \ref{cuentita}
$$
T_{\varphi_{\omega_a}}T_{\overline{\varphi}_{\omega_a}}f=T_{\varphi_{\omega_a}}P_+(f/\varphi_{\omega_a})=\varphi_{\omega_a}\left(\frac{f-f(\omega_a)}{\varphi_{\omega_a}}+f(\omega_a)\bar{\omega}_a\right)=f-f(\omega_a)+\bar{\omega}_af(\omega_a)\varphi_{\omega_a}
$$
$$
=f+\langle f, k_{\omega_a}\rangle(1-\bar{\omega}_a\varphi_{\omega_a}).
$$
Note that $1-\omega_a\varphi_{\omega_a}=\frac{1-|\omega_a|^2}{1-\bar{\omega}_az}$, and thus the computation above equals
$$
f - \langle f, k_{\omega_a}\rangle  (1-|\omega_a|^2)k_{\omega_a}=f-\langle f, (1-|\omega_a|^2)^{1/2}k_{\omega_a}\rangle (1-|\omega_a|^2)^{1/2}k_{\omega_a}=f-\langle f, \psi_a\rangle \psi_a,
$$
where $\psi_a=(1-|\omega_a|^2)^{1/2}k_{\omega_a}$ is the normalization of $k_{\omega_a}$.

Note the folllowing facts:
\begin{prop}\label{anticonmutacion}
Let $a\in\mathbb{D}$. The Toeplitz operator $T_{\varphi_{\omega_a}}$ satisfies that
$$
T_{\varphi_{\omega_a}}C_a+C_aT_{\varphi_{\omega_a}}=0
$$
and 
$$
T_{\varphi_{\omega_a}}C_a^*+C_a^*T_{\varphi_{\omega_a}}=2 \omega_a\langle \ \ , 1\rangle k_{\omega_a}.
$$
\end{prop}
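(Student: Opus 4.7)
The first identity is an immediate consequence of (\ref{phis}). Since $\varphi_{\omega_a}\in H^\infty(\DD)$, the Toeplitz operator $T_{\varphi_{\omega_a}}$ is simply multiplication by $\varphi_{\omega_a}$ on $H^2$, so for $f\in H^2$,
$$
(C_aT_{\varphi_{\omega_a}}f)(z) = \varphi_{\omega_a}(\varphi_a(z))\,f(\varphi_a(z)) = -\varphi_{\omega_a}(z)\,(C_af)(z),
$$
which gives $C_a M_{\varphi_{\omega_a}} = -M_{\varphi_{\omega_a}} C_a$, i.e.\ the first identity.

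For the second identity I would use the expression (\ref{cea*}), $C_a^* = M_{1/(1-\bar{a}z)}\,C_a\,(I-aS^*)$, and reduce matters to commuting $M_{\varphi_{\omega_a}}$ with $S^*$. From $S^*g(z) = (g(z)-g(0))/z$ together with the elementary identity $\varphi_{\omega_a}(z)-\omega_a = -z(1-|\omega_a|^2)k_{\omega_a}(z)$, a direct computation gives the rank-one commutator
$$
[S^*,M_{\varphi_{\omega_a}}] = -(1-|\omega_a|^2)\,\langle\,\cdot\,,1\rangle\,k_{\omega_a},
$$
so that $(I-aS^*)M_{\varphi_{\omega_a}} = M_{\varphi_{\omega_a}}(I-aS^*) + a(1-|\omega_a|^2)\langle\,\cdot\,,1\rangle\,k_{\omega_a}$. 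Substituting this into $C_a^*T_{\varphi_{\omega_a}}$ and applying the first identity in the form $C_aM_{\varphi_{\omega_a}} = -M_{\varphi_{\omega_a}}C_a$, the principal part of $C_a^*T_{\varphi_{\omega_a}}$ becomes exactly $-M_{\varphi_{\omega_a}/(1-\bar{a}z)}\,C_a\,(I-aS^*) = -T_{\varphi_{\omega_a}}C_a^*$. This cancels in the sum, leaving the rank-one remainder
$$
T_{\varphi_{\omega_a}}C_a^* + C_a^*T_{\varphi_{\omega_a}} = a(1-|\omega_a|^2)\,\langle\,\cdot\,,1\rangle\,M_{1/(1-\bar{a}z)}\,C_a k_{\omega_a}.
$$

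The finish is purely algebraic. Computing $C_ak_{\omega_a}$ and dividing by $1-\bar{a}z$ produces $1/\bigl((1-\bar{\omega}_aa)+(\bar{\omega}_a-\bar{a})z\bigr)$, and the Möbius identities $1-\bar{\omega}_aa = \sqrt{1-|a|^2}$ and $\bar{\omega}_a-\bar{a} = -\sqrt{1-|a|^2}\,\bar{\omega}_a$ (both immediate from $\omega_a = (1-\sqrt{1-|a|^2})/\bar{a}$) collapse this to $k_{\omega_a}/\sqrt{1-|a|^2}$. A last application of the same formula yields $a(1-|\omega_a|^2) = 2\sqrt{1-|a|^2}\,\omega_a$, so the coefficient becomes exactly $2\omega_a$, as required. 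The main obstacle is nothing conceptual but rather the careful bookkeeping of the commutator step and the scalar identities relating $a$ and $\omega_a$.
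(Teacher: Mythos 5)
Your proof is correct, but for the second identity you take a genuinely different route from the paper. The paper never touches the adjoint of the composition operator: it computes $T_{\varphi_{\omega_a}}^*C_a f$ and $C_aT_{\varphi_{\omega_a}}^*f$ directly, using the projection formula $P_+(f/\varphi_b)=\frac{f-f(b)}{\varphi_b}+f(b)\bar b$ (Lemma \ref{cuentita}) together with (\ref{phis}), obtains first the dual identity $T_{\varphi_{\omega_a}}^*C_a+C_aT_{\varphi_{\omega_a}}^*=2\bar\omega_a\langle\,\cdot\,,k_{\omega_a}\rangle 1$, and then takes adjoints. You instead start from Cowen's formula (\ref{cea*}), $C_a^*=M_{1/(1-\bar az)}C_a(I-aS^*)$, push $M_{\varphi_{\omega_a}}$ through $I-aS^*$ via the rank-one commutator $[S^*,M_{\varphi_{\omega_a}}]=-(1-|\omega_a|^2)\langle\,\cdot\,,1\rangle k_{\omega_a}$, cancel the principal part with the first anticommutation relation, and then identify the rank-one remainder through the scalar identities $1-\bar\omega_a a=\sqrt{1-|a|^2}$, $\bar\omega_a-\bar a=-\sqrt{1-|a|^2}\,\bar\omega_a$ and $a(1-|\omega_a|^2)=2\sqrt{1-|a|^2}\,\omega_a$; I checked all of these and the bookkeeping of $M_{1/(1-\bar az)}C_ak_{\omega_a}=k_{\omega_a}/\sqrt{1-|a|^2}$, and they are right. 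What each approach buys: the paper's computation is symbol-free (no explicit relation between $a$ and $\omega_a$ is needed) and produces along the way the starred identity $T_{\varphi_{\omega_a}}^*C_a+C_aT_{\varphi_{\omega_a}}^*=2\bar\omega_a\langle\,\cdot\,,k_{\omega_a}\rangle 1$, which is reused later in the paper; yours avoids Lemma \ref{cuentita} entirely, replacing it by the elementary backward-shift commutator, at the price of relying on (\ref{cea*}) and on the Möbius algebra linking $a$ and $\omega_a$. One small remark: your explicit formula $\omega_a=(1-\sqrt{1-|a|^2})/\bar a$ presupposes $a\ne 0$; the case $a=0$ is immediate (then $\omega_0=0$, $C_0=C_0^*$, and both identities reduce to the first), so this is only worth a sentence, not a gap.
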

\begin{proof}
The first assertion is a direct computation:
$$
T_{\varphi_{\omega_a}}C_af=P_+\left(\varphi_{\omega_a}f(\varphi_a)\right)=\varphi_{\omega_a}f(\varphi_a),
$$
whereas
$$
C_a T_{\varphi_{\omega_a}}f=C_a (\varphi_{\omega_a}f)=\varphi_{\omega_a}(\varphi_a) f(\varphi_a),
$$
and the assertion follows recalling from (\ref{phis}) that $\varphi_{\omega_a}\circ\varphi_a=-\varphi_{\omega_a}$. 

With respect to the second assertion, using Lemma \ref{cuentita}
$$
T_{1/\varphi_{\omega_a}}C_af=P_+\left(1/\varphi_{\omega_a}f(\varphi_a)\right)=\frac{f(\varphi_a)-f(\omega_a)}{\varphi_{\omega_a}}+ \bar{\omega}_a f(\omega_a).
$$
On the other hand, similarly as above
$$
C_a T^*_{\varphi_{\omega_a}}f=C_aP_+\left(1/\varphi_{\omega_a} f\right)=C_a\left(\frac{f(z)-f(\omega_a)}{\varphi_{\omega_a}}+\bar{\omega}_af(\omega_a)\right).
$$
Since $C_a(1)=1$ and again using (\ref{phis}), we get
$$
T_{1/\varphi_{\omega_a}}C_af=-\frac{f(\varphi_a)-f(\omega_a)}{\varphi_{\omega_a}}+ \bar{\omega}_a f(\omega_a).
$$
Therefore
$$
(T_{\varphi_{\omega_a}}^*C_a+C_aT_{\varphi_{\omega_a}}^*)f=2\bar{\omega}_af(\omega_a)=2\bar{\omega}_a\langle f , k_{\omega_a}\rangle 1,
$$
i.e.,
$$
T_{\varphi_{\omega_a}}^*C_a+C_aT_{\varphi_{\omega_a}}^*=2\bar{\omega}_a\langle \ \ , k_{\omega_a}\rangle 1,
$$
and thus
$$
T_{\varphi_{\omega_a}}C_a^*+C_a^*T_{\varphi_{\omega_a}}=2\omega_a \langle \ \ , 1\rangle k_{\omega_a}.
$$
\end{proof}
\begin{rem}
It follows form the previous lemma that $T_{\varphi_{\omega_a}}^2=T_{\varphi^2_{\omega_a}}$ commutes with $C_a$. Whereas
$$
T^2_{\varphi_{\omega_a}}C_a^*=T_{\varphi_{\omega_a}}\left(-C_a^*T_{\varphi_{\omega_a}}+2\omega_a\langle \ \ , 1\rangle k_{\omega_a}\right)=C_a^*T_{\varphi_{\omega_a}}^2+3\omega_a\{\langle\ \ , 1\rangle T_{\varphi_{\omega_a}}k_{\omega_a}-\langle \ \ , T_{\varphi_{\omega_a}}^*1\rangle k_{\omega_a}\}.
$$
Note that $T_{\varphi_{\omega_a}}k_{\omega_a}=\frac{\omega_a-z}{(1-\bar{\omega}_az)^2}$ and $T_{\varphi_{\omega_a}}^*1=\bar{\omega}_a$ (constant function).
In particular $T_{\varphi_{\omega_a}}^2$ commutes with $C_a^*$ modulo a rank two operator.
\end{rem}
\begin{rem}
Another consequence of the above lemma is that the isomery $T_{\varphi_{\omega_a}}$ intertwines $C_a$ with $-C_a$:
$$
T_{\varphi_{\omega_a}}^*C_aT_{\varphi_{\omega_a}}f=T_{\varphi_{\omega_a}}^*\left(-\varphi_{\omega_a} f(\varphi_a)\right)=-P_+\left((1/\varphi_{\omega_a}) \varphi_{\omega_a} f(\varphi_a)\right)=-f(\varphi_a),
$$
i.e., $T_{\varphi_{\omega_a}}^*C_aT_{\varphi_{\omega_a}}=-C_a.$ 
\end{rem}

Denote $A$ the operator $A:=W_{\omega_a}C_aW_{\omega_a}=T_{f_a}C_0=C_0T_{1/f_a}$ from the previous section. In \cite{disco y composicion} Lemma  4.1 (or perform the elementary computation) it was shown that for $b\in\mathbb{D}$
\begin{equation}\label{shift chueco}
W_bT_{\varphi_b}W_b=S=T_z.
\end{equation}
We can combine this with Proposition \ref{anticonmutacion} to obtain
\begin{coro}
With the current notations, we have that
$$ 
SA+AS=0
$$
and
$$
SA^*+A^*S=2\omega_a\langle \ \ , k_{\omega_a}\rangle 1.
$$
\end{coro}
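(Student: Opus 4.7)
The plan is to take both identities of Proposition \ref{anticonmutacion} and conjugate them by $W_{\omega_a}$, using that $W_{\omega_a}$ is a symmetry ($W_{\omega_a}^2=I$, $W_{\omega_a}^*=W_{\omega_a}$) together with identity (\ref{shift chueco}), which with $b=\omega_a$ reads $S = W_{\omega_a}T_{\varphi_{\omega_a}}W_{\omega_a}$. The definition of $A$ is $A = W_{\omega_a}C_aW_{\omega_a}$, and taking adjoints gives $A^* = W_{\omega_a}C_a^*W_{\omega_a}$.

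For the first identity, I would left- and right-multiply $T_{\varphi_{\omega_a}}C_a + C_a T_{\varphi_{\omega_a}} = 0$ by $W_{\omega_a}$, inserting $W_{\omega_a}^2 = I$ between the two factors on each side. This turns each product of the form $T_{\varphi_{\omega_a}}C_a$ into $(W_{\omega_a}T_{\varphi_{\omega_a}}W_{\omega_a})(W_{\omega_a}C_aW_{\omega_a}) = SA$, and similarly for the other term, giving $SA+AS=0$ directly.

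For the second identity, the same conjugation of $T_{\varphi_{\omega_a}}C_a^* + C_a^*T_{\varphi_{\omega_a}} = 2\omega_a\langle\,\cdot\,,1\rangle k_{\omega_a}$ transforms the left-hand side into $SA^* + A^*S$. The real content of the proof is checking what conjugation does to the rank-one operator on the right. For any $f\in H^2$,
$$
W_{\omega_a}\bigl(\langle\,\cdot\,,1\rangle k_{\omega_a}\bigr)W_{\omega_a}f = \langle W_{\omega_a}f,1\rangle\, W_{\omega_a}(k_{\omega_a}) = \langle f, W_{\omega_a}(1)\rangle\, W_{\omega_a}(k_{\omega_a}),
$$
using selfadjointness of $W_{\omega_a}$. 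The rest is a direct computation: from the definition (\ref{Wa}), $W_{\omega_a}(1) = (1-|\omega_a|^2)^{1/2}k_{\omega_a}$, and using $\varphi_{\omega_a}\circ\varphi_{\omega_a}=\mathrm{id}$ one finds that $k_{\omega_a}(\varphi_{\omega_a}(z)) = (1-\bar{\omega}_az)/(1-|\omega_a|^2)$, so the $1-\bar{\omega}_az$ factor cancels with $k_{\omega_a}(z)$ and $W_{\omega_a}(k_{\omega_a})=(1-|\omega_a|^2)^{-1/2}\cdot 1$. The two normalization factors cancel, leaving $\langle f,k_{\omega_a}\rangle\,1$.

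The only potentially delicate point is the bookkeeping in this last cancellation (making sure the conjugate $\bar{\omega}_a$ does not sneak in), but since the expressions $W_{\omega_a}(1)$ and $W_{\omega_a}(k_{\omega_a})$ are computed by direct substitution with no complex conjugation involved, this is routine. Combining, the conjugated right-hand side equals $2\omega_a\langle\,\cdot\,,k_{\omega_a}\rangle 1$, completing the proof.
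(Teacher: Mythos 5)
Your proof is correct and follows essentially the same route as the paper: conjugate both identities of Proposition \ref{anticonmutacion} by the symmetry $W_{\omega_a}$, insert $W_{\omega_a}^2=I$, and use (\ref{shift chueco}) together with $A=W_{\omega_a}C_aW_{\omega_a}$, then evaluate the conjugated rank-one operator via $W_{\omega_a}1=\sqrt{1-|\omega_a|^2}\,k_{\omega_a}$. The only cosmetic difference is that you compute $W_{\omega_a}k_{\omega_a}=(1-|\omega_a|^2)^{-1/2}\cdot 1$ directly from the definition, whereas the paper deduces it by applying $W_{\omega_a}^2=I$ to $W_{\omega_a}1$; both are immediate.
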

\begin{proof}
$$
0=W_{\omega_a}\left(T_{\varphi_{\omega_a}}C_a+C_aT_{\varphi_{\omega_a}}\right)W_{\omega_a}=W_{\omega_a}T_{\varphi_{\omega_a}}W_{\omega_a}W_{\omega_a}C_aW_{\omega_a}+W_{\omega_a}C_aW_{\omega_a}W_{\omega_a}T_{\varphi_{\omega_a}}W_{\omega_a}
$$
$$
=SA+AS.
$$
Similarly
$$
SA^*+A^*S=W_{\omega_a}\left(2\omega_a\langle \ \ , 1\rangle k_{\omega_a}\right)W_{\omega_a}=2\omega_a\langle \ \ , W_{\omega_a}1\rangle W_{\omega_a}k_{\omega_a}.
$$
The proof follows noting that $W_{\omega_a}1=\sqrt{1-|\omega_a|^2}T_{k_{\omega_a}}C_{\omega_a}1=\sqrt{1-|\omega_a|^2}k_{\omega_a}$ and thus 
$$
1=W_{\omega_a}W_{\omega_a}1=W_{\omega_a}(\sqrt{1-|\omega_a|^2}k_{\omega_a})=\sqrt{1-|\omega_a|^2}W_{\omega_a}k_{\omega_a},
$$
i.e., $W_{\omega_a}k_{\omega_a}=\frac{1}{\sqrt{1-|\omega_a|^2}}1$. 
\end{proof}
Note that the second assertion of the above corollary  is equivalent to 
\begin{equation}
S^*A+AS^*=2\bar{\omega}_a(\langle \ \ , k_{\omega_a}\rangle 1)^*=2\bar{\omega}_a\langle \ \ , 1\rangle k_{\omega_a}.
\end{equation}
Then, using the again this first assertion, 
$$
SS^*A+SAS^*=SS^*A-ASS^*=2\bar{\omega}_a\langle \ \ , 1\rangle Sk_{\omega_a}.
$$
Since $SS^*=I-\langle \ \ , 1\rangle 1$, we get that
$$
A\langle \ \ , 1\rangle 1-\langle \ \ , 1\rangle 1 A=\langle \ \ , 1\rangle A1-\langle \ \ , A1\rangle 1=2\bar{\omega}_a\langle \ \ , 1\rangle Sk_{\omega_a}.
$$
Since $A1=T_{f_a}C_01=f_a$, we obtain
\begin{equation}
\langle \ \ , f_a\rangle 1- \langle \ \ , 1\rangle f_a=2\bar{\omega}_a\langle \ \ , 1\rangle Sk_{\omega_a}.
\end{equation}

E. Andruchow, {\sc  {Instituto Argentino de Matem\'atica, `Alberto P. Calder\'on', CONICET, Saavedra 15 3er. piso,
(1083) Buenos Aires, Argentina }} and {\sc Universidad Nacional de General Sarmiento, J.M. Gutierrez 1150, (1613) Los Polvorines, Argentina}

  e-mail: eandruch@campus.ungs.edu.ar


\begin{thebibliography}{XX}

\bibitem{ando} Ando, T., Unbounded or bounded idempotent operators in Hilbert space. Linear Algebra Appl. 438 (2013), no. 10, 3769--3775. 




\bibitem{disco y composicion}  E. Andruchow; G. Corach; L. Recht. Symmetries and reflections from composition operators in the disk, Integral Equations Operator Theory (to appear),  arXiv:2307.01287.





\bibitem{spitkovskyetal} A. B\"ottcher; I.M. Spitkovsky. A gentle guide to the basics of two projections theory. Linear Algebra Appl. 432 (2010), 1412--1459.





 




\bibitem{cpr} Corach, G.; Porta, H.; Recht, L., The geometry of spaces of projections in $C^*$-algebras, Adv. Math. 101 (1993), no. 1, 59--77.

\bibitem{cowen} Cowen, C. C. Linear fractional composition operators on $H^2$. Integral Equations Operator Theory 11 (1988), no. 2, 151--160. 

\bibitem{cowenmccluer} Cowen, C. C.; MacCluer, B. D.,  Composition operators on spaces of analytic functions. Studies in Advanced Mathematics. CRC Press, Boca Raton, FL, 1995. 

\bibitem{davis} Davis, C., Separation of two linear subspaces, Acta Sci. Math. Szeged 19 (1958) 172--187.



\bibitem{douglas} Douglas, R. G.,  Banach algebra techniques in operator theory. Second edition. Graduate Texts in Mathematics, 179. Springer-Verlag, New York, 1998.

\bibitem{duncan taylor} Duncan, J.; Taylor, P. J. Norm inequalities for $C^*$-algebras. Proc. Roy. Soc. Edinburgh Sect. A 75 (1975/76), no. 2, 119--129. 







\bibitem{halmos} Halmos, P. R., Two subspaces, Trans. Amer. Math. Soc. 144 (1969), 381--389.









\bibitem{pedersen}G. K.  Pedersen. Measure theory for C$^*$ algebras. II. Math. Scand. 22 (1968), 63--74.








\end{thebibliography}
\end{document}